\theoremstyle{plain}
\newtheorem{theorem}{Theorem}
\newtheorem{lemma}[theorem]{Lemma}
\theoremstyle{definition}
\newtheorem{example}[theorem]{Example}
\newtheorem{note}{Remark}
\newcommand{\D}{\mathcal{D}}
\newcommand{\SHD}{\mathcal{SD}}           % scatter halfspace depth
\newcommand{\Smed}{\mathfrak{M}^\mathrm{sc}}        % scatter median
\newcommand{\Lmed}{\mathfrak{M}^{\mathrm{loc}}}        % scatter median
\newcommand{\Sl}[1]{\mathrm{Sl}\left(#1\right)} % slab
\newcommand{\cSl}[1]{\mathrm{cSl}\left(#1\right)} % complementary slab
\newcommand{\trace}{\mathrm{tr}} % trace of matrix
\newcommand{\diag}{\mathrm{diag}} % diagonal of matrix
\newcommand{\R}{\mathbb{R}}              % reals
\newcommand{\N}{\mathbb{N}}                 % natural numbers
\newcommand{\set}[1]{\left\lbrace #1 \right\rbrace} % set
\newcommand{\PD}[1][d]{\mathbb{PD}_{#1}}       % positive definite matrices
\renewcommand{\P}[1]{\mathcal{P}(\R^{#1})}  % set of Borel probabilities on R^d
\renewcommand{\S}[1]{\mathbb{S}^{#1-1}}     % unit sphere in R^d
\newcommand{\prob}{\mathbb{P}}              % probability
\newcommand{\E}{\mathbb{E}}                 % expectation
\newcommand{\eqdis}{\overset{d}{=}}                      % equal distributions
\newcommand{\T}{^\mathsf{T}}                             % transposition
\newcommand{\abs}[1]{\left|{#1}\right|}                  % abs value
\renewcommand{\vec}[1]{{\bm{#1}}}                        % vectors
\newcommand{\mat}[1]{\mathbf{#1}}                        % matrices
\newcommand{\inner}[1]{\left\langle#1\right\rangle}      % inner product
\newcommand{\norm}[1]{\left\lVert#1\right\rVert}         % norm
\newcommand{\hs}{^\mathrm{hs}}
\newcommand{\dist}[1][\alpha]{\mathfrak{D}_{#1}}
\newlength{\leftstackrelawd}
\newlength{\leftstackrelbwd}
\def\leftstackrel#1#2{\settowidth{\leftstackrelawd}%
{${{}^{#1}}$}\settowidth{\leftstackrelbwd}{$#2$}%
\addtolength{\leftstackrelawd}{-\leftstackrelbwd}%
\leavevmode\ifthenelse{\lengthtest{\leftstackrelawd>0pt}}%
{\kern-.5\leftstackrelawd}{}\mathrel{\mathop{#2}\limits^{#1}}}
\newcommand{\proofpart}[2]{%
  \par
  \addvspace{\medskipamount}%
  \noindent\emph{\textbf{Part #1: #2.}}%\par\nobreak
  %\addvspace{\smallskipamount}%
  %\@afterheading
}
\title{Location and scatter halfspace median under $\alpha$-symmetric distributions}
\author{Filip Bo\v{c}inec\,\orcidlink{0009-0000-4415-9302}\thanks{Faculty of Mathematics and Physics, Charles University, Prague, Czech Republic. Email: \texttt{bocinec@karlin.mff.cuni.cz}} 
\and Stanislav Nagy\,\orcidlink{0000-0002-8610-4227}\thanks{Faculty of Mathematics and Physics, Charles University, Prague, Czech Republic. Email: \texttt{nagy@karlin.mff.cuni.cz}}}
\date{}  % No date
\begin{document}

\maketitle

\begin{center}
\textit{Preprint (Author’s Original Manuscript). This is the original 
unrefereed version of the manuscript. The Version of Record has been 
published in the \textit{Journal of Nonparametric Statistics} 
(\textcopyright\ Taylor \& Francis) and is available at: 
\url{https://doi.org/10.1080/10485252.2025.2600417}.}
\end{center}

\begin{abstract}
In a landmark result, \citet{Chen_etal2018} showed that multivariate medians induced by halfspace depth attain the minimax optimal convergence rate under Huber contamination and elliptical symmetry, for both location and scatter estimation. We extend some of these findings to the broader family of $\alpha$-symmetric distributions, which includes both elliptically symmetric and multivariate heavy-tailed distributions. For location estimation, we establish an upper bound on the estimation error of the location halfspace median under the Huber contamination model. An analogous result for the standard scatter halfspace median matrix is feasible only under the assumption of elliptical symmetry, as ellipticity is deeply embedded in the definition of scatter halfspace depth. To address this limitation, we propose a modified scatter halfspace depth that better accommodates $\alpha$-symmetric distributions, and derive an upper bound for the corresponding $\alpha$-scatter median matrix. Additionally, we identify several key properties of scatter halfspace depth for $\alpha$-symmetric distributions.
\end{abstract}

\vspace{1em}

\noindent\textbf{MSC classes:} Primary 62G05; 62G35; 62H12

\vspace{0.5em}
\noindent\textbf{Keywords:} Halfspace depth, Scatter halfspace depth, Contamination model, $\alpha$-symmetric distributions

%\vspace{1em}
%\tableofcontents
%\newpage

\section{Introduction: Location and scatter halfspace median} %\label{sec:1}

Robust estimation of location and scale for univariate data has been one of the~cornerstones of robust statistics, and is currently already well understood \citep{Huber_Ronchetti2009, Hampel_etal1986}. Among location estimators, a~prominent place is occupied by the~median, naturally generating high breakdown estimators satisfying plausible equivariance properties. Its scale counterpart is the~median absolute deviation, sharing a~similar array of desirable traits.

Robust estimation of location and scatter for multidimensional data is much more involved, and no canonical high-breakdown equivariant analogs of the~median or the~median absolute deviation exist. Instead, a~variety of diverse approaches, each with its own advantages and limitations, can be found in the~literature. Selecting from the~more recent approaches, we refer to \citet{Rousseeuw_Hubert2013, Maronna_Yohai2017, Lugosi_Mendelson2021, Dalalyan_Miansyan2022, Zhang_etal2024} and \citet{Fishbone_Mili2024}.

In this paper, we focus on two outstanding location and scatter estimators induced by the~halfspace depth for location and scatter, respectively. Pioneered by \citet{Tukey1975} and introduced to robust statistics by \citet{Donoho_Gasko1992}, the~halfspace depth is a~well-studied tool of nonparametric statistics whose aim is to establish concepts such as ordering, ranks, or quantiles to multivariate datasets. Writing $\P{d}$ for the~set of all Borel probability distributions on $\R^d$, the~\emph{halfspace depth} (abbreviated as \textbf{HD}) of a~point $\vec{x} \in \R^d$ with respect to (w.r.t.) $P \in \P{d}$ is defined as
    \begin{equation}    \label{eq: halfspace depth}
    \D(\vec{x}; P) = \inf_{\vec{u}\in\S{d}} \prob\left(\inner{\vec{X}, \vec{u}}\leq \inner{\vec{x}, \vec{u}}\right),
    \end{equation}
where $\vec{X}\sim P$ and $\S{d} = \left\{ \vec{u} \in \R^d \colon \norm{\vec{u}}_2^2 = \inner{\vec{u}, \vec{u}} = 1 \right\}$ is the~unit sphere in $\R^d$. The~\textbf{HD} is also called Tukey, or location depth. It quantifies the~centrality of $\vec{x}$ within the~geometry of the~mass of $P$. The~higher $\D(\vec{x}; P)$ is, the~more `representative' the~point $\vec{x}$ is of the~location of $P$. The~\textbf{HD}~\eqref{eq: halfspace depth} induces a~natural location parameter called the~halfspace median (also Tukey's median), defined as the~barycenter\footnote{A barycenter of a~non-empty compact convex set $S \subset \R^d$ is the~expectation of a~random vector that is uniformly distributed on $S$. The~set $\Lmed(P)$ is always non-empty, compact, and convex as proved in \citet[Section~3]{Rousseeuw_Ruts1999}.} $\vec{\mu}\hs = \vec{\mu}\hs(P) \in \R^d$ of the~halfspace median set
    \begin{equation}    \label{eq: Tukey median}
    \Lmed(P) = \left\{ \vec{x} \in \R^d \colon \D(\vec{x}; 
 P) = \max_{\vec{y} \in \R^d} \D(\vec{y}; P) \right\}.
    \end{equation}
For $d = 1$, $\vec{\mu}\hs$ is the~classical median. For a~random sample $\set{\vec{X}_i}_{i=1}^n$ from $P \in \P{d}$ and the~associated empirical distribution $\widehat{P}_n \in \P{d}$ assigning mass $1/n$ to each $\vec{X}_i$, $i = 1, \dots, n$, we define the~sample \textbf{HD} of $\vec{x}\in\R^d$ w.r.t. $\widehat{P}_n$ as $\D(\vec{x}; \widehat{P}_n)$. Its deepest point $\widehat{\vec{\mu}}\hs_n = \vec{\mu}\hs\left( \widehat{P}_n\right)$ is called the~\emph{sample halfspace median} of $\set{\vec{X}_i}_{i=1}^n$.

A robust estimator of the~scatter parameter of $P \in \P{d}$ that shares similarities with the~halfspace median is obtained when using the~scatter halfspace depth~\citep{Zhang2002, Chen_etal2018, Paindaveine2018}. Denote by $\PD$ the~set of all positive definite $d\times d$ matrices. The~\emph{scatter halfspace depth} of $\mat{\Sigma} \in \PD$ w.r.t. $P \in \P{d}$ is defined as
    \begin{equation}    \label{eq: SHD}
    \SHD (\mat{\Sigma}; P) 
     = \inf_{\vec{u} \in \S{d}} \min 
    \left\{ 
    \prob\left( \abs{\inner{\vec{X}-T(P), \vec{u}}} \leq \sqrt{\vec{u}\T \mat{\Sigma} \vec{u}} \right),\prob\left( \abs{\inner{\vec{X}-T(P), \vec{u}}} \geq \sqrt{\vec{u}\T \mat{\Sigma} \vec{u}} \right) 
    \right\}, 
    \end{equation}
where $T\colon \P{d}\to\R^d$ is a~properly chosen location functional. Throughout this paper, we take $T(P)$ to be the~halfspace median $\vec{\mu}\hs$ of $P$.\footnote{This choice is made only for convenience; all our results hold true for any other affine equivariant location functional $T$, using obvious minor modifications.} The~scatter halfspace depth (abbreviated as \textbf{sHD}) measures the~`centrality' of a~candidate scatter matrix $\mat{\Sigma}$ within the~space $\PD$ w.r.t. $P$. As for the~location case, consider the~set
    \begin{equation} \label{eq: scatter median set} 
    \Smed(P) = \left\{ \mat{\Sigma} \in \PD \colon \SHD(\mat{\Sigma}; P) = \sup_{\mat{S} \in \PD} \SHD(\mat{S}; P) \right\} 
    \end{equation}
of matrices that maximize~\eqref{eq: SHD}. Note that unlike for the~location case, the~set $\Smed(P)$ can be empty in general~\citep[Section~3]{Paindaveine2018}. However, for $P\in\P{d}$ that is smooth at $T(P)$,\footnote{We say that $P \in \P{d}$ is smooth at $\vec{x}\in\R^d$ if each hyperplane $h$ passing through $\vec{x}$ has $P(h) = 0$. We say that $P$ is smooth if it is smooth at each point $\vec{x}\in\R^d$.} the~barycenter of $\Smed(P)$ is well defined~\citep[Section~3 and Theorem~4.3]{Paindaveine2018}. We call that barycenter the~\emph{scatter halfspace median matrix} $\mat{\Sigma}\hs = \mat{\Sigma}\hs(P) \in\PD$. The~matrix $\mat{\Sigma}\hs$ offers a~well-performing nonparametric alternative to the~usual scatter estimators.

Same as for the~location \textbf{HD}, the~sample \textbf{sHD} is defined by $\SHD(\mat{\Sigma}; \widehat{P}_n)$, and the~sample scatter halfspace median matrix is $\widehat{\mat{\Sigma}}\hs_n = \mat{\Sigma}\hs(\widehat{P}_n) \in \PD$. The~sample \textbf{sHD} attains finitely many values in $\set{i/n \colon i = 0, 1, \ldots, n}$, hence $\widehat{\mat{\Sigma}}\hs_n$ always exists.

This paper builds on the~recent remarkable result of \citet{Chen_etal2018}, who demonstrated that in the~classical Huber's $\varepsilon$-contamination model, the~halfspace median $\vec{\mu}\hs \in \R^d$ and the~scatter halfspace median matrix $\mat{\Sigma}\hs \in \PD$ both are minimax optimal under elliptical models. Revisiting and expanding the~proofs of \citet{Chen_etal2018}, our aim is to generalize their results on the~performance of the~multivariate medians to the~broader collection of the~so-called $\alpha$-symmetric distributions \citep{Misiewicz1996, Uchaikin_Zolotarev1999, Koldobsky2005} for $\alpha > 0$. That family provides a~versatile generalization of elliptical models (for $\alpha = 2$), also encompassing multivariate stable distributions, and a~broad spectrum of heavy-tailed distributions. The~$\alpha$-symmetric distributions have found many applications in engineering, finance, or physics. At the~same time, their plausible analytical properties make them the~largest general class of multivariate distributions whose \textbf{HD} and \textbf{sHD} are possible to be expressed explicitly. That was noted by \citet{Masse_Theodorescu1994} and \citet{Chen_Tyler2004} for the~\textbf{HD}, and used by \citet{Nagy2019} for the~\textbf{sHD}.

After introducing notations, in Section~\ref{sec:2}, we provide a~brief overview of the~$\alpha$-symmetric distributions and their properties. The~concentration inequality for the~location halfspace median $\vec{\mu}\hs$ for contaminated $\alpha$-symmetric distributions is derived in Section~\ref{sec:3}. In Section~\ref{sec:4}, we derive several results about the~scatter halfspace median matrix for $\alpha$-symmetric distributions. In particular, we show that under the~assumption of the~$\alpha$-symmetry of $P$, (i) the~scatter median set $\Smed(P)$ from~\eqref{eq: scatter median set} is a~singleton, (ii) its unique element $\mat{\Sigma}\hs$ is Fisher consistent, (iii) we derive the~explicit form of $\mat{\Sigma}\hs$, and (iv) show that this matrix $\mat{\Sigma}\hs(P)$ is continuous in the~argument of the~distribution $P$; in particular, it is always estimated consistently by $\widehat{\mat{\Sigma}}_n\hs$. The~problem of establishing a~concentration inequality for the~scatter halfspace median matrix under Huber's contamination model is treated in Section~\ref{sec:5}. First, in Section~\ref{section: rate for elliptical}, we present an upper bound on deviation of the~\textbf{sHD} of $\mat{\Sigma}\hs$ and recover the~upper bound for estimating the~scatter parameter of spherical distributions~\citep[Theorem~3.1]{Chen_etal2018} using the~scatter halfspace median. However, this method cannot be directly applied when $\alpha \neq 2$. To overcome this limitation, in Section~\ref{section: alphaSHD}, we define the~$\alpha$-scatter halfspace depth (abbreviated as $\alpha$-\textbf{sHD}), a~version of~\eqref{eq: SHD} adapted specifically to $\alpha$-symmetric distributions. Using the~median matrix induced by the~$\alpha$-\textbf{sHD}, a~concentration inequality analogous to that obtained for $\alpha = 2$ can be given. Additional minor technical details are collected in the~online Supplementary Material.

\smallskip\noindent\textbf{Notations.} The~set of positive integers is $\N$. The~elements of a~vector $\vec{x} \in \R^d$ are typically denoted by $\vec{x} = \left(x_1, \dots, x_d \right)\T$; the~elements of a~matrix $\mat{\Sigma} \in \R^{d \times d}$ can be written as $\mat{\Sigma} = \left( \sigma_{i,j} \right)_{i,j=1}^d$. By $\mat{I}$, we mean any square identity matrix. The~operator norm of a~symmetric matrix $\mat{A} \in \R^{d\times d}$ is defined by $\norm{\mat{A}}_{\mathrm{op}} = \sup_{\vec{u} \in \S{d}} \abs{\vec{u}\T \mat{A} \vec{u}}$. The~maximum of $a, b$ is denoted by $a\vee b$. An absolute constant $C$ means that the~constant $C$ does not depend on sample size $n$, dimension $d$, contamination amount $\varepsilon$, and confidence parameter $\delta$. Such absolute constants, typically denoted by $C, C_1, C_2$, etc., take different values in each result below.

All random quantities are defined on a~common probability space $\left( \Omega, \mathcal A, \prob\right)$. For $P \in\P{d}$, $\vec{X} \sim P$ means that the~random vector $\vec{X}$ has distribution $P$. We write $\vec{X} \eqdis \vec{Y}$ if the~random vectors $\vec{X}$ and $\vec{Y}$ have the~same distribution. For a~transform $\varphi \colon \R^d \to \R^k$ and $\vec{X} \sim P \in \P{d}$, we write $P_{\varphi(\vec{X})} \in \P{k}$ for the~distribution of the~transformed variable $\varphi(\vec{X}) \in \R^k$. In particular, $P_{\vec{X}} = P$. We say that $P \in \P{d}$ is smooth if 
    \begin{equation}\label{eq7}
    P\left(\set{\vec{x}\in\R^d\colon \inner{\vec{x}, \vec{u}}=t}\right)=0 \quad \mbox{for all $\vec{u}\in\S{d}$ and $t\in\R$}.
    \end{equation}

\section{Preliminaries on \texorpdfstring{$\alpha$}{alpha}-symmetric distributions}\label{sec:2}

For $\alpha > 0$, the~$\alpha$-norm\footnote{For $\alpha<1$, this function violates the~triangle inequality. Since we do not make use of the~triangle inequality of a~norm, we still call it a~norm for convenience.} of a~vector $\vec{x}=(x_1, \ldots, x_d)\T\in\R^d$ is defined as
\begin{equation*}
    \norm{\vec{x}}_\alpha = 
    \begin{cases}
        \left(\sum_{i=1}^d \abs{x_i}^\alpha\right)^{1/\alpha}&\text{ if }0<\alpha<\infty,\\
        \max\set{\abs{x_1}, \ldots, \abs{x_d}}&\text{ if }\alpha = \infty.
    \end{cases}
\end{equation*}
The distribution $P$ of a~random vector $\vec{X}=(X_1, \ldots, X_d)\T\sim P \in \P{d}$ is said to be \emph{$\alpha$-symmetric} \citep{Fang_etal1990, Misiewicz1996, Koldobsky2005} if the~characteristic function of $\vec{X}$ takes the~form
\begin{equation}\label{eq6}
    \psi_\vec{X}(\vec{t})=\E \exp{(i \inner{\vec{t}, \vec{X}})}=\phi(\norm{\vec{t}}_\alpha)\text{ for all }\vec{t}\in\R^d,
\end{equation}
where $\phi$ is a~continuous function on $\R$. An equivalent definition \citep[Theorem~7.1]{Fang_etal1990} is that for all $\vec{u}\in\S{d}$ it holds
\begin{equation}\label{eq1}
    \inner{\vec{X}, \vec{u}}\eqdis \norm{\vec{u}}_\alpha X_1
\end{equation}
for $X_1$ the~first element of $\vec{X}$. Because the~characteristic function~\eqref{eq6} is real and symmetric, $\vec{X}$ has to be centrally symmetric about the~origin in the~sense that $\vec{X}\eqdis -\vec{X}$. Consider the~univariate distribution function
    \begin{equation}\label{eq2}
    F(t) = \prob(X_1\leq t) \quad \text{for all }t \in\R.
    \end{equation}
Throughout this paper, we assume that for all $\alpha$-symmetric distributions $P \in \P{d}$ 
    \begin{enumerate}[ref=($\mathsf{A}_1$), label=\upshape{($\mathsf{A}_1$)}]
    \item \label{A1} it holds that $\prob(\vec{X}=\vec{0})=0$ for $\vec{X} \sim P$, and $d>1$.
    \end{enumerate} 
This assumption is imposed to avoid trivial and non-interesting situations. Condition~\ref{A1} implies that $P$ is smooth as in~\eqref{eq7}. That was proved by~\citet[Theorem~II.2.3]{Misiewicz1996}. In particular, the~function $F$ from~\eqref{eq2} is continuous on $\R$, $F(t)= 1-F(-t)$ for all $t\in\R$, and $F(0)=1/2$.

For $\alpha = 2$, the~collection of $\alpha$-symmetric distributions is exactly the~family of spherically symmetric distributions $P \in \P{d}$ \citep[Chapter~2]{Fang_etal1990}, characterized by the~property that for $\vec{X} \sim P$ and any $\mat{A} \in \R^{d \times d}$ orthogonal, $\mat{A} \vec{X} \eqdis \vec{X}$. An important example is the~standard $d$-variate Gaussian distribution, which is spherically symmetric.

For $\alpha \in (0,2)$, the~$\alpha$-symmetric distributions provide a~rich family of multivariate models with many important applications. In particular, they include multivariate stable distributions \citep{Uchaikin_Zolotarev1999}, one of the~most important classes of distributions in probability theory. For $\alpha > 2$, $\alpha$-symmetric distributions exist only in the~plane \citep[(P9) and the~discussion in Section~II.4]{Misiewicz1996}. In our general treatment of $\alpha$-symmetric distributions below we, however, cover also the~case $\alpha > 2$ and $d = 2$. A~particularly simple $1$-symmetric distribution is the~multivariate extension of the~Cauchy distribution described in the~following example.

\begin{example}\label{ex: 1}
Let $\vec{X}\sim P\in\P{d}$ consist of $d$ independent Cauchy distributed random variables. Since the~characteristic function of a~standard Cauchy random variable is $\exp(-\abs{t})$ for $t \in \R$, we have $\psi_\vec{X}(\vec{t})=\exp(-\sum_{j=1}^d \abs{t_j})=\exp(-\norm{\vec{t}}_1)$ for $\vec{t}\in\R^d$, and $\vec{X}$ is $1$-symmetric. The~distribution function of $X_1$ is $F(t)=1/2+\arctan(t)/\pi$ for $t\in\R$.
\end{example}

An important property of $\alpha$-symmetric distributions is their invariance w.r.t. the~group of symmetries of a~(hyper-)cube in $\R^d$. Such symmetries are represented by \emph{signed permutation matrices}, which are defined as matrices $\mat{A} \in \R^{d \times d}$ that have a~unique non-zero element in each row and each column, and each of these non-zero elements is either $1$ or $-1$. The~following lemma will be used in Section~\ref{sec:4}.

\begin{lemma}   \label{lemma: sign permutation}
Let $\vec{X}\sim P\in\P{d}$ be $\alpha$-symmetric. If $\mat{A} \in \R^{d\times d}$ is a~signed permutation matrix, then $\mat{A}\vec{X}\eqdis\vec{X}$.
\end{lemma}

\begin{proof}
For any $\vec{t}\in\R^d$, the~characteristic function of $\mat{A}\vec{X}$ is by~\eqref{eq6}
    \begin{equation*}
        \psi_{\mat{A}\vec{X}} (\vec{t}) =\E\exp(i\inner{\vec{t}, \mat{A}\vec{X}})=\E\exp\left(i\inner{\mat{A}\T\vec{t}, \vec{X}}\right) =\phi\left(\norm{\mat{A}\T\vec{t}}_\alpha\right)\leftstackrel{\mathrm{(A)}}{=}\phi\left(\norm{\vec{t}}_\alpha\right)=\psi_{\vec{X}}(\vec{t}).
    \end{equation*}
In $\mathrm{(A)}$, we used the~fact that $\mat{A}\T$ is also a~signed permutation matrix. Transforming $\vec{t}$ to $\mat{A}\T \vec{t}$ thus only permutes and possibly changes the~signs of the~entries of $\vec{t}$, leaving its $\alpha$-norm intact. Characteristic functions of $\mat{A}\vec{X}$ and $\vec{X}$ are equal, hence also $\mat{A}\vec{X}\eqdis\vec{X}$.
\end{proof}

We conclude this section with an important note on how to understand all the~results stated in this paper.

\begin{note}    \label{remark: affine}
Every $\alpha$-symmetric random vector $\vec{X} \sim P \in \P{d}$ defines a~location-scatter family of distributions
    \[  \mathcal{P}^{\mathrm{loc/sc}}(P) = \left\{ \mat{A}\vec{X}+\vec{\mu} \sim P_{\mat{A}\vec{X}+\vec{\mu}} \mbox{ for $\vec{\mu} \in \R^d$ and $\mat{A} \in \R^{d\times d}$ non-singular} \right\}
    \]
in $\P{d}$, given by the~distributions of random vectors $\mat{A}\vec{X}+\vec{\mu}$. Here, $\vec{\mu}\in\R^d$ is a~location parameter, and $\mat{A} \in \R^{d\times d}$ is a~non-singular matrix giving the~scatter parameter within $\mathcal{P}^{\mathrm{loc/sc}}(P)$. Using this construction for spherically symmetric distributions ($\alpha = 2$), we recover the~collection of all (full-dimensional) elliptically symmetric distributions \citep{Fang_etal1990}. That family, of course, contains all $d$-variate Gaussian distributions, and many more well-studied measures.

Throughout this paper, we state concentration inequalities for the~canonical location parameter $\vec{\mu}\hs = \vec{0} \in \R^d$ and canonical scatter parameter $\mat{\Sigma}\hs = \sigma^2 \mat{I} \in \PD$ (for appropriate $\sigma > 0$, see Section~\ref{sec:4}) for $\alpha$-symmetric random vectors $\vec{X} \sim P \in \P{d}$. All these results should be understood in the~context of the~estimation of location and scatter in the~induced location-scatter families $\mathcal{P}^{\mathrm{loc/sc}}(P)$. That can be seen because the~location halfspace median $\vec{\mu}\hs = \vec{\mu}\hs(P) \in \R^d$ \citep[Lemma~2.1]{Donoho_Gasko1992} and the~scatter halfspace median matrix $\mat{\Sigma}\hs = \mat{\Sigma}\hs(P) \in \PD$ \citep[Theorem~2.1]{Paindaveine2018} are affine equivariant, meaning that for any $P = P_{\vec{X}} \in \P{d}$, $\vec{\mu} \in \R^d$, and $\mat{A} \in \R^{d\times d}$ non-singular we have
    \begin{equation}   \label{eq: affine equivariance}
    \begin{aligned}
    \vec{\mu}\hs(P_{\mat{A}\vec{X} + \vec{\mu}}) & = \mat{A} \,\vec{\mu}\hs(P_{\vec{X}}) + \vec{\mu}, \\
    \mat{\Sigma}\hs(P_{\mat{A}\vec{X} + \vec{\mu}}) & = \mat{A} \, \mat{\Sigma}\hs(P_{\vec{X}}) \, \mat{A}\T.
    \end{aligned}
    \end{equation}
Applying affine equivariance, the~results derived throughout this paper for the~canonical location-scatter parameters for $\alpha$-symmetric distributions can be understood as results for the~location-scatter families $\mathcal{P}^{\mathrm{loc/sc}}(P)$ of $\alpha$-symmetric measures; for a~detailed argument in the~location case see also Remark~\ref{remark: affine 2} below.
\end{note}

\section{Estimation of location halfspace median under contamination}\label{sec:3}

Consider an $\alpha$-symmetric random vector $\vec{X} \sim P \in \P{d}$ whose first marginal is given by $F$ from~\eqref{eq2}. The~\textbf{HD} of a~point $\vec{x}\in\R^d$ w.r.t. $P$ can be derived explicitly
\begin{align}
    \begin{split}\label{eq3}
    \D(\vec{x}; P) &=\inf_{\vec{u}\in\S{d}}\prob\left(\inner{\vec{X}, \vec{u}}\leq \inner{\vec{x}, \vec{u}}\right) \leftstackrel{\eqref{eq1}}{=}\inf_{\vec{u}\in\S{d}}\prob\left(\norm{\vec{u}}_\alpha X_1\leq \inner{\vec{x}, \vec{u}}\right)\\
    &\leftstackrel{\eqref{eq2}}{=} %\inf_{\vec{u}\in\S{d}}F\left(
    %\frac{\inner{\vec{x}, \vec{u}}}{\norm{\vec{u}}_\alpha}\right)\leftstackrel{}{=}
    F\left(\inf_{\vec{u}\in\S{d}}
    \frac{\inner{\vec{x}, \vec{u}}}{\norm{\vec{u}}_\alpha}\right)  \leftstackrel{\mathrm{(H)}}{=}F\left(-\norm{\vec{x}}_\beta\right)=1-F\left(\norm{\vec{x}}_\beta\right),
    \end{split}
\end{align}
for $\beta$ the~conjugate index to $\alpha$ given by
\begin{equation}\label{eq: betaDef}
    \beta =
    \begin{cases}
        \frac{\alpha}{\alpha-1} & \text{if }\alpha>1, \\
        \infty & \text{if }0<\alpha\leq 1.
    \end{cases}
\end{equation}
Equality~$\mathrm{(H)}$ comes from a~generalized H\"older inequality; a~proof can be found in \citet[Lemma~5.1]{Chen_Tyler2004}. Under~\ref{A1}, \citet[Theorem~II.2.2, for $\alpha \ne 2$]{Misiewicz1996} and \citet[Theorem~2.10, for $\alpha = 2$]{Fang_etal1990} prove that all marginal distributions~\eqref{eq1} of any $\alpha$-symmetric $P \in \P{d}$ have connected support, meaning that $F$ is continuous and strictly increasing at $0$. That implies that the~unique halfspace median of any such $P$ is $\vec{\mu}\hs = \vec{0} \in \R^d$, and the~maximum \textbf{HD} is $\sup_{\vec{x} \in \R^d} \D(\vec{x}; P) = \D(\vec{0}; P) = 1/2$.

Further, we examine the~properties of the~\textbf{HD} of $\alpha$-symmetric distributions in the~presence of contamination. Consider $P, Q\in\P{d}$ and $\varepsilon\in (0, 1)$. By $(1-\varepsilon)P+\varepsilon\, Q \in \P{d}$ we denote the~distribution $P$ which is $\varepsilon$-contaminated by $Q$, i.e.
\begin{equation*}
    ((1-\varepsilon)P+\varepsilon\, Q)(B)=(1-\varepsilon)P(B)+\varepsilon\, Q(B)\quad\text{for any Borel }B\subseteq\R^d.
\end{equation*}
These contaminated distributions form the~so-called Huber's contamination model. This model assumes that the~data may contain both `clean' observations from the~assumed distribution $P$ and `contaminating' observations from some other distribution $Q$ (outliers, faulty observations, etc.). Motivated by applications in machine learning and data science, recent years have seen a~growing interest in developing location and scatter estimators that maintain high accuracy even under contamination. Consider the~general problem of estimating a~parameter $\vec{\mu}=\vec{\mu}(P)$ with an~estimator $\widehat{\vec{\mu}}_n$ based on a~random sample $\set{\vec{X}_i}_{i=1}^n$ drawn from a~contaminated distribution $(1-\varepsilon)P+\varepsilon\, Q$. The~quality of this estimator can be assessed in various ways. Traditionally, statistical work has focused on expected risk measures, such as the~mean squared error $\E\norm{\widehat{\vec{\mu}}_n-\vec{\mu}}_2^2$. However, such risk measures can sometimes be misleading; specifically, when the~estimation deviation $\norm{\widehat{\vec{\mu}}_n-\vec{\mu}}_2$ lacks sufficient concentration, the~expected value may not accurately capture the~typical behavior of the~estimation deviation. Consequently, we seek estimators $\widehat{\vec{\mu}}_n$ that are `close' to $\vec{\mu}$ with high probability. Our objective, therefore, is to identify, for any given sample size $n$ and \emph{confidence parameter} $\delta \in (0, 1)$, the~smallest possible value $R(\delta, n, d, \varepsilon)$ for which
$\prob(\norm{\widehat{\vec{\mu}}_n-\vec{\mu}}_2 \leq R(\delta, n, d, \varepsilon)) \geq 1 - \delta$.

\subsection{Concentration inequality for maximal halfspace depth}    
\label{section: location rate of depth}

The following lemma provides the~concentration inequality for estimating the~maximum \textbf{HD} in Huber's contamination model. It does not require the~$\alpha$-symmetry of the~distribution $P$. The~proof of this lemma is a~direct modification of the~proof of~\citet[Theorem~2.1]{Chen_etal2018}, and for completeness, it is included in detail in the~Supplementary Material, Section~S.1.

\begin{lemma}\label{lemma: conv. of max depth}
    Let $P\in\P{d}$ be any distribution with a~halfspace median $\vec{\mu}\hs \in \R^d$ and let $\varepsilon<1/3$. Consider $\widehat{\vec{\mu}}_n\hs$ a~sample halfspace median based on a~random sample $\vec{X}_1, \ldots, \vec{X}_n$ drawn from a~contaminated distribution $(1-\varepsilon) P +\varepsilon\,Q$, where $Q\in\P{d}$. Then there exist absolute constants $C_1, C_2>0$ such that for any $\delta\in(0,1/2)$ the~inequality
    \begin{equation}\label{eq: DepthBound1}
        \prob\left(\abs{\D(\vec{\mu}\hs; P)-\D(\widehat{\vec{\mu}}_n\hs; P)}\leq \frac{\varepsilon}{1-\varepsilon}+C_1\sqrt{\frac{d}{n}}+C_2\sqrt{\frac{\log{(1/\delta)}}{n}}\right)\geq 1-2\delta
    \end{equation}
    holds for all $n\in \N$ such that
    \begin{equation*}
        \sqrt{\frac{\log(1/\delta)}{2\,n}}<1/3.
    \end{equation*}
\end{lemma}

The concentration inequality~\eqref{eq: DepthBound1} implies that 
\begin{equation*}
   \abs{\D(\vec{\mu}\hs; P) - \D(\widehat{\vec{\mu}}_n\hs; P)} \precsim \varepsilon + \sqrt{\frac{d}{n}}
\end{equation*} 
holds with high probability for a~large enough sample size $n$. Notably, Lemma~\ref{lemma: conv. of max depth} applies without any assumptions on $P$, meaning that no moment conditions are required. For any $t > 0$, we have
\begin{equation}\label{eq: decay}
    \prob\left(\abs{\D(\vec{\mu}\hs; P) - \D(\widehat{\vec{\mu}}_n\hs; P)} > \frac{\varepsilon}{1 - \varepsilon} + C_1\sqrt{\frac{d}{n}} + t \right) \leq 2 \exp\left( -\frac{n\, t^2}{C_2^2} \right).
\end{equation}
Thus, the~estimation deviation of maximum depth exhibits strong tail decay.

\subsection{Concentration inequality for halfspace median under \texorpdfstring{$\alpha$}{alpha}-symmetry} 

We now turn to the~task of estimating the~halfspace median $\vec{\mu}\hs$ of $\alpha$-symmetric random vector $\vec{X} \sim P\in \P{d}$ based on a~contaminated random sample $\set{\vec{X}_i}_{i=1}^n\sim (1-\varepsilon)P+\varepsilon\, Q$. We will assume that 
    \begin{enumerate}[label=($\mathsf{A}_2$), ref=\upshape{($\mathsf{A}_2$)}]
        \item \label{A2} the~distribution function $F$ of $X_1$ from~\eqref{eq2} satisfies the~condition
    \begin{equation}\label{eq5}
     \inf_{0<\abs{t}< \gamma}\frac{\abs{F(t)-F(0)}}{\abs{t}}\geq\kappa
    \end{equation}
    for some fixed constants $\gamma, \kappa >0$ such that $\varepsilon/(1-\varepsilon)<\gamma\kappa<1/2$.
    \end{enumerate} 
Recall that $F(0)=1/2$. Condition~\ref{A2} guarantees that the~marginal distribution function $F$ grows faster than $t\mapsto \kappa \, t+1/2$ on some appropriate neighborhood of $0$ depending on $\varepsilon$, the~amount of contamination. Note that~\eqref{eq5} also implies 
\begin{equation}\label{eq14}
    \quad\inf_{\abs{t}\geq\gamma}\abs{F(t)-F(0)}\geq\gamma\kappa,
\end{equation}
see Figure~\ref{figure: condition} for illustration. Finally, note that the~mapping $\varepsilon\mapsto \varepsilon/(1-\varepsilon)$ is strictly increasing on $(0, 1)$ and maps $1/3$ to $1/2$. Therefore, $\varepsilon/(1-\varepsilon)<\gamma\,\kappa<1/2$ from~\ref{A2} is never satisfied for $\varepsilon \geq 1/3$.

\begin{figure}[htpb]
    \centering
    \includegraphics[width=0.5\linewidth]{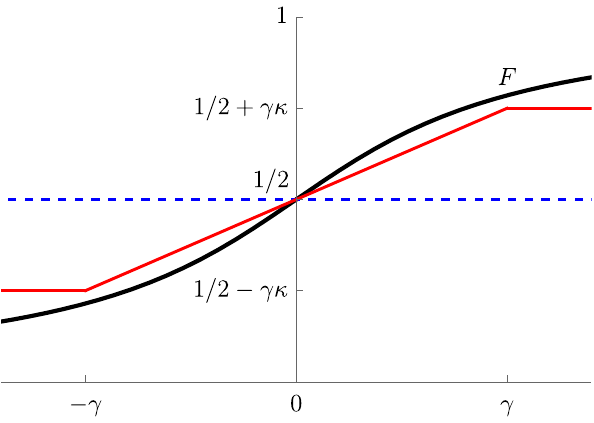}
    \caption{Condition~\ref{A2}: There must exist constants $\gamma, \kappa>0$ such that the~distribution function $F$ (black) does not extend into the~region between the~red line and the~blue dashed line. In other words, $F$ cannot be too `flat' around the~origin.}
    \label{figure: condition}
\end{figure}

\begin{note}
    For spherically symmetric distributions ($\alpha=2$), the~density of the~first marginal $f=F'$ always exists and is positive and continuous at~$0$ \citep[p.~37]{Fang_etal1990}. This implies that condition~\ref{A2} is always satisfied for some $\gamma, \kappa >0$. However, even for $\alpha = 2$, we will require~\ref{A2} to hold uniformly across all distributions in the~model.
\end{note}

We consider the~model of all $\alpha$-symmetric distributions, where $\alpha>0$ is fixed, and which are $\varepsilon$-contaminated by some other distribution. In Theorem~\ref{th: convFixed} below, we show that in such a~model, the~$\norm{\cdot}_\beta$-deviation (i.e., the~estimation deviation measured by the~$\beta$-norm $\norm{\cdot}_\beta$) of the~halfspace median $\widehat{\vec{\mu}}\hs_n$ can be bounded from above by $\left(\varepsilon+\sqrt{\frac{d}{n}}+\sqrt{\frac{\log{(1/\delta)}}{n}}\right)$ with probability at least $1-2\delta$. This result is consistent with the~finding of~\citet[Theorem~2.1]{Chen_etal2018} for $\alpha=\beta=2$, i.e., for spherically symmetric distributions.

\begin{theorem}\label{th: convFixed}
    Fix $\varepsilon \in(0, 1/3)$, $\alpha>0$ and let $\beta$ be the~conjugate index of $\alpha$ defined in equation~\eqref{eq: betaDef}. Then, for any  $\delta\in(0, 1/2)$, there exists an absolute constant $C>0$ such that the~inequality
    \begin{equation} \label{eq: ineqA}
        \inf_{P, Q} \prob\left(\norm{\widehat{\vec{\mu}}\hs_n-\vec{\mu}\hs}_\beta \leq C \left(\varepsilon+\sqrt{\frac{d}{n}}+\sqrt{\frac{\log{(1/\delta)}}{n}}\right)\right)\geq 1-2\delta
    \end{equation}
    holds for all $n\in\N$ such that 
    \begin{equation}\label{eq: condition on n}
        C_1\sqrt{\frac{d}{n}}+C_2\sqrt{\frac{\log{(1/\delta)}}{n}}<\gamma\kappa-\frac{\varepsilon}{1-\varepsilon},
    \end{equation}
    where $C_1, C_2>0$ are the~absolute constants from Lemma~\ref{lemma: conv. of max depth}. The~halfspace median $\widehat{\vec{\mu}}\hs_n$ in~\eqref{eq: ineqA} is based on a~random sample $\vec{X}_1, \ldots, \vec{X}_n\sim (1-\varepsilon)P+\varepsilon\,Q$. The~infimum in~\eqref{eq: ineqA} is taken over all $\alpha$-symmetric distributions $P$ such that condition~\ref{A2} holds uniformly with constants $\gamma, \kappa>0$ where $\varepsilon/(1-\varepsilon)<\gamma\kappa\leq1/2$, and over all contaminating distributions $Q\in\P{d}$.
\end{theorem}

\begin{note}
    Note that $\gamma\kappa-\varepsilon/(1-\varepsilon)>0$ by condition~\ref{A2}. Therefore, inequality~\eqref{eq: condition on n} is satisfied for all $n\geq n_0$, where $n_0$ depends on $\delta, d, \varepsilon, \gamma$ and $\kappa$. 
\end{note}

\begin{proof}[Proof of Theorem~\ref{th: convFixed}]
    Fix $\delta\in(0, 1/2)$ and consider $\gamma, \kappa>0$ such that condition~\ref{A2} is uniformly satisfied. Consider $n\in\N$ such that~\eqref{eq: condition on n} holds. Because $\gamma\kappa-\varepsilon/(1-\varepsilon)<1/2$ and $C_2>5$ (see equation~(S.8) in Supplementary Material, Section~S.1), this implies that 
    \begin{equation*}
        \sqrt{\frac{\log(1/\delta)}{2\,n}}<1/3.
    \end{equation*}
    By Lemma~\ref{lemma: conv. of max depth}, for any $P\in\P{d}$ and $\widehat{\vec{\mu}}_n\hs$ based on $\set{\vec{X}_i}_{i=1}^n\sim (1-\varepsilon)P+\varepsilon\,Q$, we have 
    \begin{equation}\label{eq10}
        \abs{\D(\vec{\mu}\hs; P)-\D(\widehat{\vec{\mu}}_n\hs; P)}\leq \frac{\varepsilon}{1-\varepsilon}+C_1\sqrt{\frac{d}{n}}+C_2\sqrt{\frac{\log{(1/\delta)}}{n}}
    \end{equation}
    with probability at least $1-2\delta$.
    
    Now, consider $\alpha$-symmetric distribution $P$ with the~distribution function of $X_1$ denoted by $F$. Note that $\vec{\mu}\hs=\vec{0}$. By formula~\eqref{eq3} for the~\textbf{HD} w.r.t. $P$ and bound \eqref{eq10}, we have with probability at least $1-2\delta$ that
    \begin{equation}\label{eq8}
    \abs{F\left(\norm{\widehat{\vec{\mu}}_n\hs\hspace{-2pt}-\hspace{-2pt}\vec{\mu}\hs}_\beta\right)\hspace{-2pt}-\hspace{-2pt}F(0)}=\abs{F\left(\norm{\widehat{\vec{\mu}}_n\hs}_\beta\right)\hspace{-2pt}-\hspace{-2pt}F(0)}  \leq\frac{\varepsilon}{1-\varepsilon}+C_1\sqrt{\frac{d}{n}}+C_2\sqrt{\frac{\log{(1/\delta)}}{n}}.
    \end{equation}
    By our choice of $n$~\eqref{eq: condition on n}, the~right-hand side of~\eqref{eq8} is strictly bounded from above by $\gamma\kappa$. Further, by~\eqref{eq14} we deduce
    \begin{equation*}
        \norm{\widehat{\vec{\mu}}_n\hs-\vec{\mu}\hs}_\beta< \gamma.
    \end{equation*}
    Note that $\varepsilon<1/3$, hence also $\varepsilon/(1-\varepsilon)<3/2\,\varepsilon$. In turn, condition~\ref{A2} together with~\eqref{eq8} implies that 
    \begin{align*}
        \norm{\widehat{\vec{\mu}}_n\hs-\vec{\mu}\hs}_\beta&\leq\frac{1}{\kappa}\abs{F\left(\norm{\widehat{\vec{\mu}}_n\hs-\vec{\mu}\hs}_\beta\right)-F(0)}  \leq \frac{1}{\kappa}\left(\frac{3}{2}\varepsilon+C_1\sqrt{\frac{d}{n}}+C_2\sqrt{\frac{\log{(1/\delta)}}{n}}\right)\\
        &\leq C \left(\varepsilon+\sqrt{\frac{d}{n}}+\sqrt{\frac{\log{(1/\delta)}}{n}}\right)
    \end{align*}
    holds with probability at least $1-2\delta$ for an appropriately chosen absolute constant $C>0$. This is true for any $\alpha$-symmetric $P$ satisfying condition~\ref{A2}, which concludes the~proof.
\end{proof}

For $\alpha=\beta=2$, we get the~upper bound of order $\left(\varepsilon+\sqrt{d/n}+\sqrt{\log{(1/\delta)}/n}\right)$ for estimation deviation measured by Euclidean norm. As shown by~\citet[Theorem~2.2]{Chen_etal2018}, this order is optimal (up to a~constant), indicating the~minimax optimality of the~halfspace median in the~model of contaminated spherically (and elliptically, see Remarks~\ref{remark: affine} and~\ref{remark: affine 2}) symmetric distributions. Thus, the~halfspace median achieves an optimal estimation deviation in such a~model. In the~model of $\alpha$-symmetric distributions, $\alpha\neq 2$, the~deviation still achieves an upper bound of order $\left(\varepsilon+\sqrt{d/n}+\sqrt{\log{(1/\delta)}/n}\right)$, but only if measured in the~$\beta$-norm. For example, consider $\alpha<2$, which implies $\beta>2$. We can use the~well-known inequality $\norm{\vec{x}}_2\leq d^{1/2-1/\beta} \norm{\vec{x}}_\beta$. Thus, in the~$\alpha$-symmetric distribution model, the~Euclidean deviation of the~halfspace median achieves an upper bound of order $d^{1/2-1/\beta}\left(\varepsilon+\sqrt{d/n}+\sqrt{\log{(1/\delta)}/n}\right)$.
%This error rate is provably optimal in the~low-dimensional setting, where $d = O(1)$.
The presence of the~factor $\sqrt{\log(1/\delta)}$ is particularly significant, as it reflects a~rapid decay in the~tail probability of estimation deviations. This is analogous to the~decay observed in the~concentration inequality for the~maximal halfspace depth~\eqref{eq: decay}, and suggests that the~halfspace median offers robust performance with high probability guarantees.

\begin{note}    \label{remark: affine 2}
In accordance with Remark~\ref{remark: affine}, one can also consider affine images of $\alpha$-symmetric distributions in Theorem~\ref{th: convFixed}. For $\beta\geq 1$, the~induced matrix $\beta$-norm of a~$d \times d$ matrix $\mat{A}$ is defined as
\begin{equation*}
    \norm{\mat{A}}_\beta = \sup_{\vec{x} \neq \vec{0}} \frac{\norm{\mat{A}\vec{x}}_\beta}{\norm{\vec{x}}_\beta}.
\end{equation*}
Let $M > 1$ be a~constant. Let $\vec{X} \sim P_\vec{X} \in \P{d}$ be $\alpha$-symmetric, and consider the~distribution of $\mat{A}\vec{X} + \vec{\mu} \sim P_{\mat{A}\vec{X} + \vec{\mu}} \in \P{d}$ for any $\vec{\mu} \in \R^d$ and non-singular $d \times d$ matrix $\mat{A}$ such that $\norm{\mat{A}}_\beta \leq M$. Denote by $\widehat{\vec{\mu}}\hs_n$ a~halfspace median based on a~random sample from $(1 - \varepsilon)P_{\mat{A}\vec{X} + \vec{\mu}} + \varepsilon\,Q$. Using the~affine equivariance of the~halfspace median from~\eqref{eq: affine equivariance}, we observe that $\widehat{\vec{\mu}}\hs_n$ estimates (ignoring the~contamination) the~halfspace median $\vec{\mu}\hs(P_{\mat{A}\vec{X} + \vec{\mu}}) = \vec{\mu}$ of $P_{\mat{A}\vec{X} + \vec{\mu}}$. Transforming this random sample via the~inverse affine mapping $\varphi\colon\vec{y} \mapsto \mat{A}^{-1}(\vec{y} - \vec{\mu})$, we obtain a~random sample from $(1 - \varepsilon)P_\vec{X} + \varepsilon\,Q'$ for $Q'\in\P{d}$ an affine transformation of $Q$. The~sample halfspace median based on the~latter transformed sample, denoted by $\widetilde{\vec{\mu}}\hs_n$, estimates (ignoring the~contamination again) the~halfspace median $\vec{\mu}\hs(P_\vec{X})=\vec{0}$ of $P_\vec{X}$. We can then bound
\begin{align*}
    \norm{\widehat{\vec{\mu}}\hs_n - \vec{\mu}\hs(P_{\mat{A}\vec{X} + \vec{\mu}})}_\beta &= \norm{\widehat{\vec{\mu}}\hs_n - \vec{\mu}}_\beta = \norm{\mat{A}\left(\widetilde{\vec{\mu}}\hs_n-\vec{\mu}\hs(P_\vec{X})\right)}_\beta \\ &\leq \norm{\mat{A}}_\beta \norm{\widetilde{\vec{\mu}}\hs_n-\vec{\mu}\hs(P_\vec{X})}_\beta \leq M \norm{\widetilde{\vec{\mu}}\hs_n-\vec{\mu}\hs(P_\vec{X})}_\beta.
\end{align*}
Therefore, for large enough $n$, we have as in Theorem~\ref{th: convFixed} 
\begin{equation*}
    \norm{\widehat{\vec{\mu}}\hs_n-\vec{\mu}\hs(P_{\mat{A}\vec{X} + \vec{\mu}})}_\beta \leq M C\left(\varepsilon+\sqrt{d/n}+\sqrt{\log{(1/\delta)}/n}\right)
\end{equation*}
uniformly with probability at least $1 - 2\delta$.
\end{note}

\begin{note}    
    Using the~inequality $\norm{\widehat{\vec{\mu}}\hs_n-\vec{\mu}\hs}_\beta\geq\norm{\widehat{\vec{\mu}}\hs_n-\vec{\mu}\hs}_\infty$, one can derive a~uniform result in Theorem~\ref{th: convFixed} across all $\alpha$-symmetric distributions by replacing the~$\beta$-norm with the~supremum norm $\norm{\cdot}_\infty$. Specifically, the~estimation error $\norm{\widehat{\vec{\mu}}\hs_n-\vec{\mu}\hs}_\infty$ can be bounded with high probability by $C \left(\varepsilon+\sqrt{\frac{d}{n}}+\sqrt{\frac{\log{(1/\delta)}}{n}}\right)$, uniformly over all $\alpha$-symmetric distributions $P$, $\alpha>0$, that satisfy condition~\ref{A2} with constants $\gamma, \kappa>0$ such that $\varepsilon/(1-\varepsilon)<\gamma\kappa\leq1/2$, and over all contaminating distributions $Q\in\P{d}$. This implies that within the~class of $\alpha$-symmetric distributions satisfying condition~\ref{A2}, the~upper bound $\left(\varepsilon+\sqrt{d/n}+\sqrt{\log{(1/\delta)}/n}\right)$ is attained in the~$\infty$-norm. 
    %Furthermore, the~inequality $\norm{\vec{x}}_2 \leq \sqrt{d} \norm{\vec{x}}_\infty$ leads to an upper bound of $\sqrt{d}\left(\varepsilon+\sqrt{d/n}+\sqrt{\log{(1/\delta)}/n}\right)$ for the~estimation error measured in the~Euclidean norm. This rate is optimal in the~low-dimensional regime $d = O(1)$, since even spherical distributions—contained within our model—cannot achieve faster convergence rates in $n$~\citep[Theorem~2.2]{Chen_etal2018}. It is worth noting that this bound differs from that of \citet{Chen_etal2018} for Gaussian distributions by an extra factor of $\sqrt{d}$. However, our result holds in a~substantially more general setting that includes multivariate heavy-tailed distributions. Whether this upper bound is also optimal for arbitrary dimension $d$ across all $\alpha$-symmetric distributions, with $\alpha > 0$, remains an interesting open question. Nevertheless, the~concentration inequality we obtain demonstrates fast tail decay and requires no moment assumptions on the~underlying distribution, which is often a~key requirement in the~literature~\citep{Lugosi_Mendelson2021, Dalalyan_Miansyan2022}. Our result relies solely on the~assumption of $\alpha$-symmetry.
\end{note}

\section{Scatter halfspace median of \texorpdfstring{$\alpha$}{alpha}-symmetric distributions}\label{sec:4}
In the~second part of this work, we are interested in estimating the~scatter parameter for $\alpha$-symmetric distributions $P \in \P{d}$ using the~\textbf{sHD}~\eqref{eq: SHD}. As far as we are aware, the~only available result on the~\textbf{sHD} of $\alpha$-symmetric distributions is the~expression for the~depth $\SHD(\mat{\Sigma}; P)$ given in \citet{Nagy2019}. That result, however, does not deal with the~scatter halfspace median matrix $\mat{\Sigma}\hs$ of $P$. This section provides several properties of $\mat{\Sigma}\hs$ that are of independent interest. We show (i) conditions under which the~\textbf{sHD} $\SHD(\mat{\Sigma}; P)$ and its maximizer $\mat{\Sigma}\hs(P)$ are continuous in the~argument of $P$, (ii) show that the~scatter halfspace median matrix is Fisher consistent under $\alpha$-symmetry of $P$, and (iii) give an explicit expression for this matrix, including a~proof of its uniqueness.

Recall that in the~definition~\eqref{eq: SHD} of the~\textbf{sHD}, we consider the~location functional $T$ to be the~halfspace median~\eqref{eq: Tukey median}. In all the~results of this section, it will be necessary that the~halfspace median $T$ is continuous at $P \in \P{d}$, meaning that whenever $P_n$ converges to $P$ weakly in $\P{d}$, then $T(P_n) \to T(P)$. This is true if $P$ is smooth, and $T(P)$ is unique \citep[Theorem~2]{Mizera_Volauf2002}. 

When discussing the~convergence of matrices, we always mean the~element-wise convergence of matrices (that is, convergence in the~Frobenius norm). Our first result expands~\citet[Theorem~3.1]{Paindaveine2018} that establishes (semi-)continuity of the~\textbf{sHD} in the~argument $\mat{\Sigma}$. We will need an analogous statement that is valid in both arguments $\mat{\Sigma}$ and $P$ of the~\textbf{sHD}.

\begin{theorem} \label{theorem: continuity}
The \textbf{sHD} mapping 
    \begin{equation*} % \label{eq: SHD map} 
    \SHD \colon \PD \times \P{d} \to [0,1] \colon \left( \mat{\Sigma}, P \right) \mapsto \SHD(\mat{\Sigma}; P) 
    \end{equation*}
is continuous in both arguments at any $(\mat{\Sigma}, P_0) \in \PD \times \P{d}$ such that $P_0$ is smooth, and $T(P_0)$ is unique. In other words, for any sequence of matrices $\left\{ \mat{\Sigma}_n \right\}_{n = 1}^\infty \subset \PD$ that converge to $\mat{\Sigma} \in \PD$, and for any sequence of distributions $\left\{ P_n \right\}_{n=1}^\infty \subset \P{d}$ that converge weakly to $P_0$ we have
    \[  {\lim}_{n \to \infty} \SHD(\mat{\Sigma}_n; P_n) = \SHD(\mat{\Sigma}; P_0). \]
% Furthermore, if $P$ is smooth in the~sense that~\eqref{eq7} holds true for all $\vec{u}\in\S{d}$ and $t \in \R$, then the~scatter halfspace depth map~\eqref{eq: SHD map} is continuous at $(\mat{\Sigma}, P)$ whenever $T$ is continuous at $P$. That is, for any sequence  $\left\{ \mat{\Sigma}_n \right\}_{n=1}^\infty \subset \PD$ converging to $\mat{\Sigma}\in\PD$ and $\left\{ P_n \right\}_{n=1}^\infty \subset \P{d}$ that converges weakly to $P \in \P{d}$ and $\lim_{n \to \infty} T(P_n) = T(P)$ we have
%    \[  \lim_{n \to \infty} \SHD(\mat{\Sigma}_n; P_n) = \SHD(\mat{\Sigma}; P).  \]
\end{theorem}

\begin{proof}
First, we rewrite the~definition of the~\textbf{sHD}~\eqref{eq: SHD} in terms of slabs. For that, define the~slab centered at $\vec{\mu} \in \R^d$ in direction $\vec{u} \in \S{d}$ of width $2\,t \geq 0$ as
    \begin{equation} \label{eq: slab}  \Sl{\vec{\mu}, \vec{u}, t} = \left\{ \vec{x} \in \R^d \colon \abs{\inner{\vec{x} - \vec{\mu}, \vec{u}}} \leq t \right\}. \end{equation}
The closure of its complementary set is denoted by
    \begin{equation} \label{eq: cslab} \cSl{\vec{\mu}, \vec{u}, t} = \left\{ \vec{x} \in \R^d \colon \abs{\inner{\vec{x} - \vec{\mu}, \vec{u}}} \geq t \right\}. \end{equation}
This allows us to rewrite $\SHD(\mat{\Sigma}; P)$ with $\vec{X} \sim P$ into
    \[  
    \inf_{\vec{u} \in \S{d}} \min \left\{ \prob\left( \vec{X} \in \Sl{T(P), \vec{u}, \sqrt{\vec{u}\T \mat{\Sigma} \vec{u}}}\right), \prob\left( \vec{X} \in \cSl{T(P), \vec{u}, \sqrt{\vec{u}\T \mat{\Sigma} \vec{u}}}\right) \right\}.    
    \]
Thanks to our assumption of smoothness of $P_0$, all slabs~\eqref{eq: slab} and their complements~\eqref{eq: cslab} are continuity sets of $P_0$. Thus, one can apply the~portmanteau theorem \citep[Theorem~11.1.1]{Dudley2002} to see that both maps
    \begin{equation*} % \label{eq: psi maps}  
    \begin{aligned}
    \psi_{1,\vec{u}} & \colon \PD \times \P{d} \to [0,1] \colon (\mat{\Sigma}, P) \mapsto \prob\left( \vec{X} \in \Sl{T(P), \vec{u}, \sqrt{\vec{u}\T \mat{\Sigma} \vec{u}}}\right), \\
    \psi_{2,\vec{u}} & \colon \PD \times \P{d} \to [0,1] \colon (\mat{\Sigma}, P) \mapsto \prob\left( \vec{X} \in \cSl{T(P), \vec{u}, \sqrt{\vec{u}\T \mat{\Sigma} \vec{u}}}\right),
    \end{aligned}
    \end{equation*}
are continuous on their domain, at any $P \in \P{d}$ where $T$ is continuous. At $P_0$, the~latter continuity requirement on $T$ is verified by \citet[Theorem~2(iv)]{Mizera_Volauf2002}.
It remains to use Berge’s Maximum theorem \citep[pp. 115–117]{Berge1963} on parametric optimization to conclude that the~depth function $\SHD$, being an infimum of a~collection of continuous functions, is itself continuous at $(\mat{\Sigma}, P_0)$. 
%
% For the~statement regarding continuity of the~depth, it is enough to realize that under the~smoothness condition imposed on $P$, the~boundaries of both the~slab~\eqref{eq: slab} and its complement~\eqref{eq: cslab} are of null $P$-mass, i.e. they are both continuity sets of $P$. The~portmanteau theorem \citep[Theorem~11.1.1]{Dudley2002} then gives that the~two maps~\eqref{eq: psi maps} are continuous whenever $T$ is continuous at $P$, and Berge’s Maximum theorem \citep[pp. 115–117]{Berge1963} again gives that the~map~\eqref{eq: SHD map} is continuous.
\end{proof}

% The~collection of maximizers $\Smed(P)$ of the~scatter halfspace depth from~\eqref{eq: scatter median set} is not necessarily a~unique matrix. This set may also be empty, but thanks to \citet[Theorem~4.3]{Paindaveine2018}, we know that $\Smed(P)$ is non-empty provided that $P$ is smooth at $T(P)$. The~scatter halfspace median set is also trivially non-empty if $P$ is an empirical measure of a~random sample. 

In the~following theorem, we give conditions under which the~scatter median set $\Smed(P)$ from~\eqref{eq: scatter median set} is continuous in the~argument of the~distribution $P \in \P{d}$.

\begin{theorem} \label{theorem: median continuity}
Let $P_0 \in \P{d}$ be smooth, and let both the~location halfspace median set $\Lmed(P_0)$ from~\eqref{eq: Tukey median} and the~scatter halfspace median set $\Smed(P_0)$ from~\eqref{eq: scatter median set} contain single elements. Denote the~location median of $P_0$ by $T(P_0) \in \R^d$ and the~scatter halfspace median matrix of $P_0$ by $\mat{\Sigma\hs} \in \PD$. Then, the~following holds true.
    \begin{enumerate}[label=(\roman*), ref=(\roman*)]
        \item For any sequence of distributions $\left\{ P_n \right\}_{n=1}^\infty \subset \P{d}$ converging weakly to $P_0$ and any sequence $\mat{\Sigma}_n \in \Smed(P_n)$ we have $\lim_{n \to \infty} \mat{\Sigma}_n = \mat{\Sigma}\hs$.
        \item Let $\widehat{P}_n \in \P{d}$ stand for the~empirical distribution of a~random sample $\vec{X}_1, \dots, \vec{X}_n$ from $P_0$. Then, the~sample scatter halfspace median matrix is strongly consistent, meaning that for any $\widehat{\mat{\Sigma}}_n \in \Smed(\widehat{P}_n)$ we have $\lim_{n\to\infty} \widehat{\mat{\Sigma}}_n = \mat{\Sigma}\hs$ almost surely.
    \end{enumerate} 
\end{theorem}

\begin{proof}
The first statement follows from the~continuity of the~\textbf{sHD} map established in Theorem~\ref{theorem: continuity} and Berge's Maximum theorem \citep[pp.~115--117]{Berge1963} applied directly to the~continuous map $\phi \colon \PD \times \mathcal P \to [0,1] \colon (\mat{\Sigma}, P) \mapsto \SHD(\mat{\Sigma}; P)$, where $\mathcal P = \{P_0\} \cup \left\{ P_n \colon n = 1, 2, \dots \right\}$. The~Maximum theorem implies that the~map $P \mapsto \Smed(P)$ from $\mathcal P$ to the~subsets of $\PD$ is an outer semi-continuous set-valued map in the~sense of \citet[Definition~5.4]{Rockafellar_Wets1998}. That means that for any $\mat{\Sigma}_n \in \Smed(P_n)$, all cluster points of the~sequence $\left\{ \mat{\Sigma}_n \right\}_{n=1}^\infty$ lie in $\Smed(P_0)$. The~same Maximum theorem gives that the~maximum depth map $\psi \colon \mathcal P \to [0,1] \colon P \mapsto \max_{\mat{\Sigma} \in \PD} \SHD(\mat{\Sigma}; P)$ is continuous. Thanks to a~tightness argument for the~convergent sequence $\left\{ P_n \right\}_{n=1}^\infty \subset \P{d}$ \citep[Theorem~11.5.4]{Dudley2002} and the~continuity of $\psi$, there must exist at least one cluster point of $\left\{ \mat{\Sigma}_n \right\}_{n=1}^\infty$ in $\PD$.\footnote{Observe that it is not possible that the~cluster point $\mat{\Sigma}_0$ of $\left\{ \mat{\Sigma}_n \right\}_{n=1}^\infty$ is a~singular matrix because in that case, a~straightforward modification of Theorem~\ref{theorem: continuity} gives that $\lim_{n\to\infty} \SHD(\mat{\Sigma}_n; P_n) = 0$, which contradicts the~outer semi-continuity of the~map $P \mapsto \Smed(P)$ established above.} Because $\Smed(P_0) = \set{\mat{\Sigma}\hs}$ is a~singleton, this means $\lim_{n\to\infty} \mat{\Sigma}_n = \mat{\Sigma}\hs$.

The second statement of the~theorem follows directly from the~first part and the~Varadarajan theorem \citep[Theorem~11.4.1]{Dudley2002} that establishes that $\widehat{P}_n$ converges weakly to $P_0$ as $n \to \infty$, almost surely.
\end{proof}

Having the~continuity of the~scatter halfspace median mapping and the~strong consistency of its sample version in Theorem~\ref{theorem: median continuity}, we now turn to the~specifics of the~scatter halfspace median for $\alpha$-symmetric distributions. 

The \textbf{sHD} for $\alpha$-symmetric distributions was treated already in \citet[Theorem~1 and formulas~(7) and~(8)]{Nagy2019}, where it was proved that for $\alpha$-symmetric distributions $P \in \P{d}$ we have
    \begin{equation} \label{eq: SHD for alpha}  
    \SHD(\mat{\Sigma}; P) = 2\, \min\left\{ F\left( \inf_{\vec{u} \in \S{d}} \frac{\sqrt{\vec{u}\T \mat{\Sigma} \vec{u}}}{\norm{\vec{u}}_{\alpha}} \right) - 1/2, 1 - F\left( \sup_{\vec{u} \in \S{d}} \frac{\sqrt{\vec{u}\T \mat{\Sigma} \vec{u}}}{\norm{\vec{u}}_{\alpha}} \right) \right\}.  
    \end{equation}
Here we used that for $\alpha$-symmetric distributions we know that $T(P) = \vec{0} \in \R^d$ and, again, write $F$ for the~distribution function of $X_1$, the~first marginal of $\vec{X} = (X_1, \dots, X_d)\T$.\footnote{Note that thanks to assumption~\ref{A1}, we do not need to use the~limit from the~left in the~second term in~\eqref{eq: SHD for alpha}, as was done in \citet[Theorem~1]{Nagy2019}.}

The next result gives the~explicit expression for the~scatter halfspace median matrix of any $\alpha$-symmetric distribution.

\begin{figure}[htpb]
    \centering
    \includegraphics[width=0.5\linewidth]{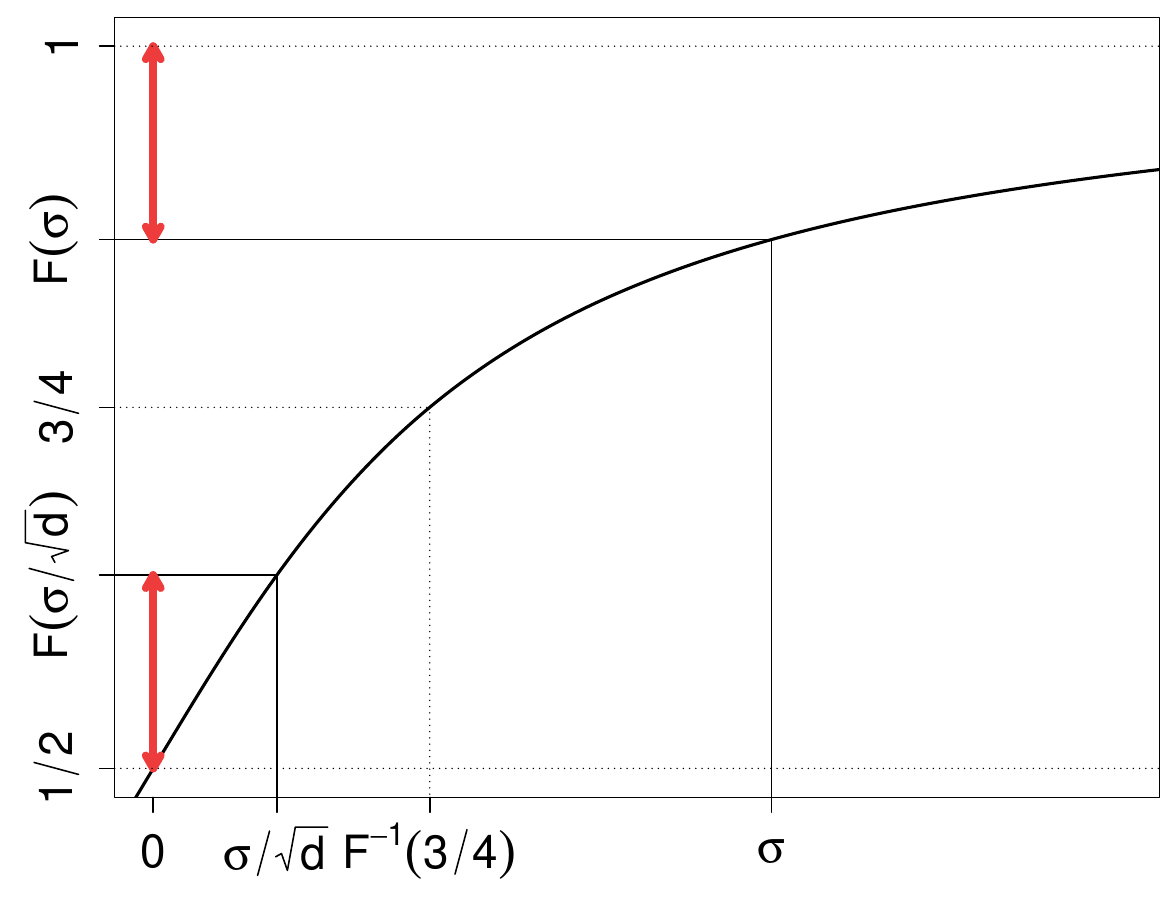}
    \caption{Distribution function $F$ of the~first marginal $X_1$ of an $\alpha$-symmetric distribution (in this case, $\alpha = 1$ and $F$ corresponds to the~Cauchy distribution). For $\alpha \ne 2$, no matrix $\mat{\Sigma} \in \PD$ can attain \textbf{sHD} $1/2$ \citep[Theorem~2]{Nagy2019}, which means that one of the~two expressions in the~minimum in~\eqref{eq: SHD for identity} must be smaller than $1/4$. The~two expressions $F(\sigma d^{1/2-1/\alpha}) - 1/2$ and $1 - F(\sigma)$ from~\eqref{eq: conditionSigma} are visualized in the~figure as the~lengths of the~red arrows (for $\alpha = 1$ and a~specific $\sigma > 0$). The~minimum of these two lengths is maximized if they are equal. Thus, the~maximum \textbf{sHD} is attained at $\sigma^2 \mat{I}$ such that~\eqref{eq: conditionSigma} is verified.}
    \label{figure: distribution function}
\end{figure}

\begin{theorem} \label{th: scatter median of symmetric}
For any $\alpha$-symmetric distribution $P \in \P{d}$, the~unique scatter halfspace median matrix is $\sigma^2 \mat{I}$, where $\sigma^2$ is the~unique solution of the~equation 
    \begin{equation}\label{eq: conditionSigma}
    F(\sigma\, d^{1/2 - 1/\alpha}) - 1/2 = 1 - F(\sigma).
    \end{equation} 
The maximum \textbf{sHD} of $P$ is 
    \begin{equation*}
    \SHD(\sigma^2 \mat{I}; P) = \max_{\mat{\Sigma} \in \PD} \SHD(\mat{\Sigma}; P) = 2F(\sigma\, d^{1/2 - 1/\alpha}) - 1=2 - 2F(\sigma).
    \end{equation*}
\end{theorem}

\begin{proof}
First, we show that the~\textbf{sHD} of $P$ is maximized at some multiple of the~identity matrix $\mat{I} \in \PD$. Write $P_{\mat{A}\vec{X}} \in \P{d}$ for the~distribution of the~random vector $\mat{A} \vec{X}$, for $\mat{A} \in \R^{d \times d}$ and $\vec{X} \sim P$, and denote by $\mat{\Sigma}\hs_{\mat{A}\vec{X}} \in \PD$ the~barycenter of the~scatter median set $\Smed(P_{\mat{A}\vec{X}})$. Thanks to the~affine equivariance of the~\textbf{sHD} \citep[Theorem~2.1]{Paindaveine2018}, we know that   
    \[
    \mat{\Sigma}\hs_{\mat{A}\vec{X}} = \mat{A} \mat{\Sigma}\hs_{\vec{X}} \mat{A}\T \quad \mbox{for all non-singular matrices $\mat{A} \in \R^{d \times d}$.}
    \]
Applying this result with $\mat{A}$ a~sign-permutation matrix as in Lemma~\ref{lemma: sign permutation}, we obtain the~identity
    \begin{equation}\label{eq: permMatrix}
    \mat{\Sigma}\hs_{\vec{X}} = \mat{A} \mat{\Sigma}\hs_{\vec{X}} \mat{A}\T \quad \mbox{for all sign-permutation matrices $\mat{A} \in \R^{d \times d}$.}    
    \end{equation}
The only matrices that satisfy~\eqref{eq: permMatrix} are multiples of the~identity matrix $\mat{I} \in \PD$. To see this, denote the~elements of $\mat{\Sigma}\hs_{\vec{X}}$ by $\sigma_{i,j}$, where $i,j\in\set{1,\ldots,d}$. Let $i\neq j$ and consider a~sign-permutation matrix $\mat{A}$ that swaps the~components $i,j$ and simultaneously reverses the~sign of component $j$ (that is, matrix $\mat{A}$ has the~only non-zero elements $a_{i,j} = -1$, $a_{j, i} = 1$, and $a_{\ell, \ell} = 1$ for all $\ell \ne i, j$). From~\eqref{eq: permMatrix} and the~symmetry of $\mat{\Sigma}\hs_{\vec{X}}$ we obtain $\sigma_{i,j}=-\sigma_{j,i}=-\sigma_{i,j}$, and  $\sigma_{i,i}=\sigma_{j,j}$. Necessarily, we get that $\mat{\Sigma}\hs_{\vec{X}} = \sigma^2 \mat{I}$ for some $\sigma > 0$.

To find the~specific value of $\sigma > 0$, we begin from the~expression for the~\textbf{sHD}~\eqref{eq: SHD for alpha}, which we want to maximize over all matrices $\mat{\Sigma} = \sigma^2 \mat{I}$. We obtain
    \begin{equation} \label{eq: SHD for identity}  
    \begin{aligned}
    \SHD(\sigma^2 \mat{I}; P) & = 2\, \min\left\{ F\left( \sigma \inf_{\vec{u} \in \S{d}} \frac{\sqrt{\vec{u}\T \vec{u}}}{\norm{\vec{u}}_{\alpha}} \right) - 1/2, 1 - F\left( \sigma \sup_{\vec{u} \in \S{d}} \frac{\sqrt{\vec{u}\T \vec{u}}}{\norm{\vec{u}}_{\alpha}} \right) \right\} \\
    & = 2\, \min\left\{ F\left( \sigma \inf_{\vec{u} \in \S{d}} \frac{\norm{\vec{u}}_2}{\norm{\vec{u}}_{\alpha}} \right) - 1/2, 1 - F\left( \sigma \sup_{\vec{u} \in \S{d}} \frac{\norm{\vec{u}}_2}{\norm{\vec{u}}_{\alpha}} \right) \right\}, \\
    % & = 2\, \min\left\{ F\left( \sigma \, d^{1/2 - 1/\alpha} \right) - 1/2, 1 - F\left( \sigma \right) \right\}.
    \end{aligned}
    \end{equation}
see also Figure~\ref{figure: distribution function}. Further argumentation is carried out separately for the~three considered cases (i) $\alpha < 2$, (ii) $\alpha > 2$, and (iii) $\alpha = 2$. The~following lemma will be useful.

\begin{lemma}\label{lemma: zeroMatrix}
Let $\mat{A}=\left(a_{i,j}\right)_{i,j=1}^d$ be a~symmetric matrix such that $a_{j,j}=0$ for all $j = 1, \dots, d$ and $\vec{v}\T\mat{A}\vec{v}\geq 0$ for all $\vec{v}\in\set{-1,1}^d$. Then $\mat{A}$ is the~zero matrix $\mat{A} = \mat{0} \in \R^{d \times d}$.
\end{lemma}

\begin{proof}[Proof of Lemma~\ref{lemma: zeroMatrix}]
We give a~proof by induction on $d$. For $d=1$ the~lemma holds trivially. Consider $d>1$ and let the~assertion hold for $d-1$. Choose arbitrary $\vec{v}=\left(v_1, \ldots, v_d\right)\T\in\set{-1,1}^d$ and let $\widetilde{\vec{v}}=\left(v_1,\ldots, v_{d-1}, -v_d\right)\T$. By the~assumption (recall that $a_{d,d}=0$), we have that
    \begin{equation}\label{eq: matLemma1}
        \vec{v}\T\mat{A}\vec{v}=\sum_{i,j=1}^{d-1}v_i v_j a_{i,j}+2v_d\sum_{i=1}^{d-1}v_i a_{i,d}\geq 0
    \end{equation}
and
    \begin{equation}\label{eq: matLemma2}
        \widetilde{\vec{v}}\T\mat{A}\widetilde{\vec{v}}=\sum_{i,j=1}^{d-1}v_i v_j a_{i,j}-2v_d\sum_{i=1}^{d-1}v_i a_{i,d}\geq 0.
    \end{equation}
By summing equations~\eqref{eq: matLemma1} and~\eqref{eq: matLemma2} we obtain that
    \begin{equation*}
    0 \leq \sum_{i,j=1}^{d-1}v_i v_j a_{i,j} =     \hat{\vec{v}}\T\hat{\mat{A}}\hat{\vec{v}}
    \end{equation*}
holds for any $\hat{\vec{v}}=\left(v_1, \ldots, v_{d-1}\right)\T\in\set{-1,1}^{d-1}$, where $\hat{\mat{A}} =\left(a_{i,j}\right)_{i,j=1}^{d-1}$ is obtained from matrix $\mat{A}$ by removing the~last column and row. This means that $\hat{\mat{A}}$ satisfies the~assumptions of this lemma and by the~induction hypothesis, $\hat{\mat{A}}$ is the~zero matrix  $\hat{\mat{A}} = \mat{0} \in \R^{(d-1)\times(d-1)}$, i.e. $a_{i,j}=0$ for all $i, j = 1,\dots,d-1$. By plugging this matrix back into~\eqref{eq: matLemma1} and~\eqref{eq: matLemma2}, we obtain that
    \begin{equation}    \label{eq: last condition on v}
    v_d\sum_{i=1}^{d-1}v_i a_{i,d}=0 \quad \mbox{for all }\vec{v} = (v_1, \dots, v_d)\T \in \{-1, 1\}^d.
    \end{equation}
To conclude, let $j \in \set{1,\dots, d-1}$ and consider vectors $\vec{u} =(1,\ldots,1)\T \in \{ -1, 1 \}^{d}$ and $\vec{w} = \vec{u} - 2\vec{e}_j \in \{-1, 1\}^{d}$. Then, taking these vectors in~\eqref{eq: last condition on v} gives 
    \begin{equation*}
    0=u_d\sum_{i=1}^{d-1}u_i a_{i,d}-w_d\sum_{i=1}^{d-1}w_i a_{i,d}=\sum_{i=1}^{d-1}a_{i,d}-\sum_{i=1}^{d-1}a_{i,d}+2a_{j,d}=2a_{j,d},
    \end{equation*}
hence $a_{j,d}=0$ for all $j\in\set{1,\ldots,d}$.
\end{proof}

We can now proceed with the~proof of Theorem~\ref{th: scatter median of symmetric}.

\proofpart{(i)}{Case $\alpha < 2$}
Using the~standard inequalities for $\alpha$-norms with $\alpha < 2$
    \begin{equation}    \label{eq: Lp inequalities}
    \begin{aligned}
    \norm{\vec{u}}_2 & \leq \norm{\vec{u}}_\alpha \leq d^{1/\alpha-1/2} \norm{\vec{u}}_2 \quad \mbox{for all $\vec{u} \in \R^d$,}    
    \end{aligned}
    \end{equation}
we see that~\eqref{eq: SHD for identity} simplifies to
    \begin{equation*}% \label{eq: SHD for identity}  
    \begin{aligned}
    \SHD(\sigma^2 \mat{I}; P) & = 2\, \min\left\{ F\left( \sigma \, d^{1/2 - 1/\alpha} \right) - 1/2, 1 - F\left( \sigma \right) \right\}.
    \end{aligned}
    \end{equation*}
To maximize the~last expression, the~constant $\sigma$ must chosen so that the~\textbf{sHD} value, as visualized using arrows in Figure~\ref{figure: distribution function}, is as large as possible. That naturally gives
    \[
    F(\sigma\, d^{1/2 - 1/\alpha}) - 1/2 = 1 - F(\sigma) = \SHD(\sigma^2 \mat{I}; P)/2.  
    \]
The unique solution to this equation provides the~specific expression for $\sigma$ for $\alpha < 2$. 

It remains to show that the~scatter halfspace median of $P$ is unique. Suppose that there is a~matrix $\mat{\Sigma} \in \PD$ such that $\SHD(\mat{\Sigma}; P) = \SHD(\sigma^2 \mat{I}; P)$. The~left-hand side inequality in~\eqref{eq: Lp inequalities} turns into equality if $\vec{u} = \vec{e}_i$ is one of the~canonical basis vectors of $\R^d$, $i = 1, \dots, d$. We know that the~support of any $\alpha$-symmetric measure $P$ with $\alpha \ne 2$ is $\R^d$ \citep[Theorem~2]{Misiewicz1992}. This means that for $\mat{\Sigma}$ to attain the~same \textbf{sHD} as $\sigma^2 \mat{I}$, in each direction $\vec{v} \in \S{d}$ where
    \begin{equation*} % \label{eq: equality1} 
    1 = \sup_{\vec{u} \in \S{d}} \frac{\norm{\vec{u}}_2}{\norm{\vec{u}}_\alpha} = \frac{\norm{\vec{v}}_2}{\norm{\vec{v}}_\alpha} 
    \end{equation*}
    % \begin{equation*} % \label{eq: equality1} 
    % d^{1/2 - 1/\alpha} = \inf_{\vec{u} \in \S{d}} \frac{\norm{\vec{u}}_2}{\norm{\vec{u}}_\alpha} = \frac{\norm{\vec{v}}_2}{\norm{\vec{v}}_\alpha} 
    % \end{equation*}
it must be true that 
    \[  
    \frac{\sqrt{\vec{v}\T \mat{\Sigma} \vec{v}}}{\norm{\vec{v}}_\alpha} \leq \sigma \quad \mbox{for each }\vec{v} = \vec{e}_i\mbox{ for }i = 1, \dots, d. % \, d^{1/2 - 1/\alpha}.
    \]
Otherwise, because $F$ is strictly increasing on $\R$, by~\eqref{eq: SHD for alpha} we would get 
    \[
    \SHD(\mat{\Sigma}; P) \leq 2 \left( 1 - F\left( \frac{\sqrt{\vec{v}\T \mat{\Sigma} \vec{v}}}{\norm{\vec{v}}_\alpha} \right) \right) < 2 \left( 1 - F\left( \sigma \right) \right) = \SHD(\sigma^2 \mat{I}; P).
    \]
    % \[
    % \SHD(\mat{\Sigma}; P) \leq 2 \left( F\left( \frac{\sqrt{\vec{v}\T \mat{\Sigma} \vec{v}}}{\norm{\vec{v}}_\alpha} \right) - 1/2 \right) < 2 \left( F\left( \sigma d^{1/2 - 1/\alpha} \right) - 1/2 \right) = \SHD(\sigma^2 \mat{I}; P).
    % \]
Altogether, we obtain that for $\mat{\Sigma}$ to attain the~maximum \textbf{sHD}, necessarily
    \begin{equation} \label{eq: condition1}  
    \sqrt{\vec{e}_i\T \mat{\Sigma} \vec{e}_i} \leq \sigma \norm{\vec{e}_i}_\alpha = \sigma \quad \mbox{for each }i = 1, \dots, d. % \geq \sigma \, d^{1/2 - 1/\alpha} \quad \mbox{for each }i = 1, \dots, d.    
    \end{equation}
An analogous set of constraints can be imposed on $\mat{\Sigma}$ also based on the~right-hand side inequality in~\eqref{eq: Lp inequalities}. There, equality is attained if and only if $\vec{u}$ is a~positive multiple of the~vector 
    \begin{equation} \label{eq: v} \vec{v} = \left(\pm 1, \pm 1, \dots, \pm 1\right)\T \in \R^d,  
    \end{equation}
where by $\pm 1$ we mean that any element of this vector may be $1$ with either a~positive or a~negative sign, and these signs may differ from one element to another. At each such vector $\vec{v}$, it must be true for $\mat{\Sigma}$ that
    \[  
    \frac{\sqrt{\vec{v}\T \mat{\Sigma} \vec{v}}}{\norm{\vec{v}}_\alpha} \geq \sigma \, d^{1/2-1/\alpha} \quad \mbox{for each }\vec{v} = \left(\pm 1, \pm 1, \dots, \pm 1\right)\T \in \R^d,
    \]
for otherwise, analogously as before, \eqref{eq: SHD for alpha} would imply
    \[
    \SHD(\mat{\Sigma}; P) \leq 2 \left( F\left( \frac{\sqrt{\vec{v}\T \mat{\Sigma} \vec{v}}}{\norm{\vec{v}}_\alpha} \right) - 1/2 \right) < 2 \left( F\left( \sigma\, d^{1/2-1/\alpha} \right) - 1/2 \right) = \SHD(\sigma^2 \mat{I}; P),
    \]
which goes against our assumption that $\mat{\Sigma}$ maximizes the~\textbf{sHD}. We obtain that $\mat{\Sigma}$ must obey the~constraints
    \begin{equation} \label{eq: condition2}  
    \sqrt{\vec{v}\T \mat{\Sigma} \vec{v}} \geq \sigma \, d^{1/2-1/\alpha} \norm{\vec{v}}_\alpha = \sigma \, \sqrt{d} \quad \mbox{for each }\vec{v} = \left( \pm 1, \dots, \pm 1\right)\T \in \R^d. 
    \end{equation}
To finalize our proof, it remains to show that the~two sets of conditions~\eqref{eq: condition1} and~\eqref{eq: condition2} already imply that $\mat{\Sigma} = \sigma^2 \mat{I}$.

Denote the~elements of the~symmetric positive definite matrix $\mat{\Sigma} \in \PD$ by $\sigma_{i,j}$, $i, j = 1, \dots, d$. Condition~\eqref{eq: condition1} then gives that $\sigma_{i,i} \leq \sigma^2$ for each $i = 1, \dots, d$. For each $\vec{v} = (\pm 1, \dots, \pm 1)\T$, condition~\eqref{eq: condition2} gives $\vec{v}\T \mat{\Sigma} \vec{v} \geq d\, \sigma^2$. Because all the~elements of $\vec{v}$ are either $1$ or $-1$, one can express $\vec{v}\T \mat{\Sigma} \vec{v}$ as
    \[  \vec{v}\T \mat{\Sigma} \vec{v} = \vec{v}\T (\mat{\Sigma}_0 + \diag(\mat{\Sigma})) \vec{v} = \vec{v}\T \mat{\Sigma}_0 \vec{v} + \trace(\mat{\Sigma}), \]
where $\mat{\Sigma}_0 = \mat{\Sigma} - \diag(\mat{\Sigma})$, $\diag(\mat{\Sigma}) \in \R^{d \times d}$ is the~diagonal matrix with the~same entries on its main diagonal as $\mat{\Sigma}$, and $\trace(\mat{\Sigma}) = \sum_{i=1}^d \sigma_{i, i}$ is the~trace of $\mat{\Sigma}$. Combining the~two inequalities in~\eqref{eq: condition1} and~\eqref{eq: condition2}, we obtain that for each $\vec{v} = (\pm 1, \dots, \pm 1)\T$ it necessarily must be true that
    \[
    d\, \sigma^2 \leq \vec{v}\T \mat{\Sigma} \vec{v} = \vec{v}\T \mat{\Sigma}_0 \vec{v} + \trace(\mat{\Sigma}) \leq \vec{v}\T \mat{\Sigma}_0 \vec{v} + d\, \sigma^2.
    \]
Thus, the~matrix $\mat{\Sigma}_0$ with zero diagonal must obey
    \[  0 \leq \vec{v}\T \mat{\Sigma}_0 \vec{v} \quad \mbox{for all }\vec{v} = (\pm 1, \dots, \pm 1)\T. \]
By Lemma~\ref{lemma: zeroMatrix}, this condition is equivalent with the~matrix $\mat{\Sigma}_0$ being the~zero matrix $\mat{0} \in \R^{d \times d}$. Consequently, for~\eqref{eq: condition1} and~\eqref{eq: condition2} to be both satisfied, $\mat{\Sigma}$ must be diagonal. The~two conditions~\eqref{eq: condition1} and~\eqref{eq: condition2} then directly imply $\mat{\Sigma} = \sigma^2 \, \mat{I}$, and we have verified that the~only matrix maximizing $\SHD(\cdot; P)$ must be $\sigma^2 \, \mat{I}$. The~proof for $\alpha < 2$ is concluded.

\proofpart{(ii)}{Case $\alpha > 2$}
For $\alpha > 2$, we proceed in complete analogy with the~case $\alpha < 2$. The~inequalities between $\alpha$-norms with $\alpha > 2$ now take the~form
    \begin{equation}   \label{eq: Lp inequalities 2}
    \begin{aligned}
    1 \leq \frac{\norm{\vec{u}}_2}{\norm{\vec{u}}_\alpha} \leq d^{1/2 - 1/\alpha} \quad \mbox{for all $\vec{u} \in \R^d$,}
    \end{aligned}
    \end{equation}
which gives that the~\textbf{sHD}~\eqref{eq: SHD for identity} simplifies to
    \[
    \SHD(\sigma^2 \mat{I}; P) = 2\, \min\left\{ F\left( \sigma \right) - 1/2, 1 - F\left( \sigma \, d^{1/2 - 1/\alpha} \right) \right\},
    \]
which is again maximized if~\eqref{eq: conditionSigma} is true. For $\vec{u} = \vec{e}_i$, $i = 1,\dots, d$, we attain equality on the~left-hand side of~\eqref{eq: Lp inequalities 2}, while for $\vec{u}$ a~positive multiple of $\vec{v}$ from~\eqref{eq: v} we get equality on the~right-hand side of~\eqref{eq: Lp inequalities 2}. Plugging these vectors into the~general expression for the~\textbf{sHD}~\eqref{eq: SHD for alpha}, we obtain a~set of inequalities for $\mat{\Sigma} \in \PD$ maximizing $\SHD(\cdot; P)$ analogous to those in the~case $\alpha < 2$, and Lemma~\ref{lemma: zeroMatrix} again concludes that necessarily $\mat{\Sigma} = \sigma^2 \mat{I}$.

\proofpart{(iii)}{Case $\alpha = 2$}
In the~spherically symmetric case $\alpha = 2$ we get $\norm{\vec{u}}_2/\norm{\vec{u}}_\alpha = 1$ for all $\vec{u} \in \S{d}$ in~\eqref{eq: SHD for identity}. This gives
    \[
    \SHD(\sigma^2\mat{I}; P) = 2 \min\left\{ F(\sigma) - 1/2, 1 - F(\sigma) \right\},
    \]
which is clearly maximized if $F(\sigma) = 3/4$. This is also a~special case of the~formula~\eqref{eq: conditionSigma}. Using the~fact that $F$ is strictly increasing at $\sigma$ \citep[Theorem~2.10]{Fang_etal1990}, we get that the~only maximizer $\mat{\Sigma} \in \PD$ of the~depth $\SHD(\cdot; P)$ must satisfy $\sqrt{\vec{u}\T \mat{\Sigma} \vec{u}} = \sigma$ for all $\vec{u} \in \S{d}$, which is obviously true only for $\mat{\Sigma} = \sigma^2 \mat{I}$.
\end{proof}

\section{Estimation of scatter halfspace median under contamination}
\label{sec:5}
In this section, we address the~problem of determining an upper bound for estimating the~scatter halfspace median matrix under $\alpha$-symmetry. In Section~\ref{section: rate for elliptical}, we present a~concentration inequality for the~\textbf{sHD} of the~scatter halfspace median matrix following from the~results of~\citet{Chen_etal2018} and recover the~upper bound for estimating the~scatter parameter of spherical distributions~\citep[Theorem~3.1]{Chen_etal2018}. However, this method cannot be directly extended to the~case where $\alpha \neq 2$. To address this limitation, in Section~\ref{section: alphaSHD}, we introduce a~modification of the~\textbf{sHD} that is well-suited for the~statistical analysis of $\alpha$-symmetric distributions.

\subsection{Concentration inequality for scatter halfspace median in spherical setting}
\label{section: rate for elliptical}

Similarly as for the~location halfspace median in Section~\ref{section: location rate of depth}, the~first step for establishing the~upper bound for the~scatter median matrix under contamination is to find the~rate for its \textbf{sHD}. This is done in the~following lemma.

\begin{lemma}   \label{lemma: SHD rate}
    Let $P\in\P{d}$ be any distribution such that its scatter halfspace median matrix $\mat{\Sigma}\hs$ exists, and let $\varepsilon<1/3$. Consider $\widehat{\mat{\Sigma}}_n\hs$ a~sample scatter halfspace median matrix based on a~random sample $\vec{X}_1, \ldots, \vec{X}_n$ drawn from a~contaminated distribution $(1-\varepsilon) P +\varepsilon\,Q$, where $Q\in\P{d}$. Then there exist absolute constants $C_1, C_2>0$ such that for any $\delta\in(0,1/2)$ the~inequality
    \begin{equation}\label{eq: DepthBound2}
        \prob\left(\abs{\SHD(\mat{\Sigma}\hs; P)-\SHD(\widehat{\mat{\Sigma}}_n\hs; P)}\leq\frac{\varepsilon}{1-\varepsilon}+C_1\sqrt{\frac{d}{n}}+C_2\sqrt{\frac{\log{(1/\delta)}}{n}}\right)\geq 1-2\delta
    \end{equation}
    holds for all $n\in \N$ such that
    \begin{equation*}
        \sqrt{\frac{\log(1/\delta)}{2\,n}}<1/3.
    \end{equation*}
\end{lemma}
\begin{proof}
    This proof is entirely analogous to that of Lemma~\ref{lemma: conv. of max depth}. It closely follows the~approach of~\citet[Theorem 7.1]{Chen_etal2018}, with the~same minor modification as in the~proof of Lemma~\ref{lemma: conv. of max depth} applied.
\end{proof}

Same as for the~location \textbf{HD}, the~concentration inequality~\eqref{eq: DepthBound2} implies that 
\begin{equation*}
   \abs{\SHD(\mat{\Sigma}\hs; P)-\SHD(\widehat{\mat{\Sigma}}_n\hs; P)} \precsim \varepsilon + \sqrt{\frac{d}{n}}
\end{equation*} 
holds with high probability for a~large enough sample size $n$. Lemma~\ref{lemma: SHD rate} applies without any assumptions on $P$ other than its scatter halfspace median matrix must exist. Same as before, for any $t > 0$, we have
\begin{equation*}%\label{eq: decay2}
    \prob\left(\abs{\SHD(\mat{\Sigma}\hs; P)-\SHD(\widehat{\mat{\Sigma}}_n\hs; P)} > \frac{\varepsilon}{1 - \varepsilon} + C_1\sqrt{\frac{d}{n}} + t \right) \leq 2 \exp\left( -\frac{n\, t^2}{C_2^2} \right),
\end{equation*}
indicating strong tail decay.

For $\alpha = 2$ and $P \in \P{d}$ spherically symmetric, the~unique scatter halfspace median is the~matrix $\mat{\Sigma}\hs = \sigma^2 \mat{I}$ with $\sigma = F^{-1}(3/4)$, thanks to~\eqref{eq: conditionSigma}. Let $\widehat{\mat{\Sigma}}_n\hs$ be a~sample scatter half\-space median matrix based on a~random sample $\vec{X}_1, \ldots, \vec{X}_n$ drawn from a~contaminated distribution $(1-\varepsilon) P +\varepsilon\,Q$, where $Q\in\P{d}$. Using Lemma~\ref{lemma: SHD rate} and~\eqref{eq: SHD for alpha} we get that for large $n$ with probability at least $1-2\delta$ we have
    \[  
    \begin{aligned} 
    & \frac{\varepsilon}{1-\varepsilon} +C_1\sqrt{\frac{d}{n}}+C_2\sqrt{\frac{\log{(1/\delta)}}{n}} \geq \abs{\SHD(\mat{\Sigma}\hs; P)-\SHD(\widehat{\mat{\Sigma}}_n\hs; P)} \\
    & = 2 \abs{\frac{1}{4} - \min\left\{ F\left( \inf_{\vec{u} \in \S{d}} \sqrt{\vec{u}\T \widehat{\mat{\Sigma}}_n\hs \vec{u}} \right) - 1/2, 1 - F\left( \sup_{\vec{u} \in \S{d}} \sqrt{\vec{u}\T \widehat{\mat{\Sigma}}_n\hs \vec{u}} \right) \right\}}, 
    \end{aligned}
    \]
which is equivalent with
    \begin{equation}    \label{eq: bound scatter 3/4}
    \sup_{\vec{u} \in \S{d}} \abs{F\left(\sqrt{\vec{u}\T \widehat{\mat{\Sigma}}_n\hs \vec{u}} \right) -  \frac{3}{4}} \leq \frac{\varepsilon}{2(1-\varepsilon)}+\frac{C_1}{2}\sqrt{\frac{d}{n}}+\frac{C_2}{2}\sqrt{\frac{\log{(1/\delta)}}{n}}.
    \end{equation}
Assume now a~condition on the~growth of $F$ that is analogous to~\ref{A2} from the~location case.
     \begin{enumerate}[label=($\mathsf{A}_3$), ref=\upshape{($\mathsf{A}_3$)}]
        \item \label{A3} The~marginal distribution function $F$ satisfies the~condition
        \begin{equation*}%\label{eq5}
        \inf_{0<\abs{t-\sigma^2}< \gamma}\frac{\abs{F\left(\sqrt{t}\right)-F\left(\sqrt{\sigma^2}\right)}}{\abs{t - \sigma^2}}\geq\kappa
        \end{equation*}
    for some fixed constants $\gamma, \kappa >0$ such that $\varepsilon/(2(1-\varepsilon))<\gamma\kappa\leq1/4$. 
    \end{enumerate} 
This is equivalent to the~first part of the~condition from~\citet[formula~(11)]{Chen_etal2018}. Condition~\ref{A3} implies that 
\begin{equation}\label{eq: scSphericalBound}
    \inf_{\abs{t-\sigma^2}\geq \gamma}\abs{F\left(\sqrt{t}\right)-F\left(\sqrt{\sigma^2}\right)}=\inf_{\abs{t-\sigma^2}\geq \gamma}\abs{F\left(\sqrt{t}\right)-\frac{3}{4}}\geq\gamma\kappa,
\end{equation}
therefore we need the~restriction $\gamma\kappa\leq1/4$. Because $\varepsilon/(2(1-\varepsilon))<\gamma\kappa$, consider $n$ large enough so that
\begin{equation*}
    \frac{\varepsilon}{2(1-\varepsilon)}+\frac{C_1}{2}\sqrt{\frac{d}{n}}+\frac{C_2}{2}\sqrt{\frac{\log{(1/\delta)}}{n}}<\gamma\kappa.
\end{equation*}
Formula~\eqref{eq: scSphericalBound} together with $\eqref{eq: bound scatter 3/4}$ then gives that $\abs{\vec{u}\T \widehat{\mat{\Sigma}}_n\hs \vec{u}-\sigma^2}<\gamma$ for all $\vec{u}\in\S{d}$. As a~consequence, condition~\ref{A3} and formula~\eqref{eq: bound scatter 3/4} imply that for $n$ large
\begin{align}
    & \norm{\widehat{\mat{\Sigma}}_n\hs - \mat{\Sigma}\hs}_{\mathrm{op}} =\sup_{\vec{u} \in \S{d}} \abs{\vec{u}\T \widehat{\mat{\Sigma}}_n\hs \vec{u} - \vec{u}\T \mat{\Sigma}\hs \vec{u}}=\sup_{\vec{u} \in \S{d}} \abs{\vec{u}\T \widehat{\mat{\Sigma}}_n\hs \vec{u} - \sigma^2}\nonumber \\ &\leq \frac{1}{2\,\kappa}\left(\frac{\varepsilon}{1-\varepsilon}+C_1\sqrt{\frac{d}{n}}+C_2\sqrt{\frac{\log{(1/\delta)}}{n}}\right) \leq C\left(\varepsilon+\sqrt{\frac{d}{n}}+\sqrt{\frac{\log{(1/\delta)}}{n}}\right)\label{eqSpherIneq}
\end{align}
for $C>0$. This holds for all $n\in\N$ such that
\begin{equation*}
    C_1\sqrt{\frac{d}{n}}+C_2\sqrt{\frac{\log{(1/\delta)}}{n}}<2\gamma\kappa-\frac{\varepsilon}{1-\varepsilon}.
\end{equation*}
In particular, we are able to recover the~minimax optimal rate of convergence for the~scatter halfspace median matrix as in \citet[Theorem~4.1]{Chen_etal2018}.

\sloppy In contrast to spherically symmetric distributions, where the~concentration inequality for the~\textbf{sHD}~of the~scatter halfspace median matrix $\mat{\Sigma}\hs = \sigma^2 \mat{I}$ (as given in Lemma~\ref{lemma: SHD rate}) suffices to derive an upper bound for the~sample scatter halfspace median $\widehat{\mat{\Sigma}}\hs_n$, this does not hold for general $\alpha$-symmetric distributions $P \in \P{d}$. The~challenge arises from the~quadratic form $\vec{u} \mapsto \sqrt{\vec{u}\T \mat{\Sigma} \vec{u}} = \norm{\mat{\Sigma}^{1/2} \vec{u}}_2$, which is compatible only with the~$2$-norm. This limitation is discussed in greater detail in Supplementary Material, Section~S.2. Nevertheless, in the~following section, we propose an alternative method for estimating the~scatter parameter of $\alpha$-symmetric distributions with $\alpha \neq 2$. This approach enables us to establish a~similar upper bound.

\subsection{Scatter halfspace depth adjusted for \texorpdfstring{$\alpha$}{alpha}-symmetric distributions}  \label{section: alphaSHD}

The incompatibility of the~\textbf{sHD} with the~$\alpha$-norm can be resolved by introducing an adjusted scatter halfspace depth, specifically suited for $\alpha$-symmetric distributions. For $\alpha > 0$ given, we introduce the~\emph{$\alpha$-scatter halfspace depth} (abbreviated as $\alpha$-\textbf{sHD}) of $\mat{\Sigma} \in \PD$ w.r.t. $P \in \P{d}$ as
    \begin{equation}\label{eq: alphaSHD}
    \begin{aligned}  
    \SHD_\alpha (\mat{\Sigma}; P) 
    & = \inf_{\vec{u} \in \S{d}} \min 
    \left\{ 
    \prob\left( \abs{\inner{\vec{X}-T(P), \vec{u}}} \leq \norm{\mat{\Sigma}^{1/2} \vec{u}}_\alpha \right),\right. \\
    & \hspace{8em}
    \left. \prob\left( \abs{\inner{\vec{X}-T(P), \vec{u}}} \geq \norm{\mat{\Sigma}^{1/2} \vec{u}}_\alpha \right) 
    \right\}, 
    \end{aligned}
    \end{equation}
where $\mat{\Sigma}^{1/2} \in \PD$ is the~unique positive definite square root matrix of $\mat{\Sigma}$ \citep[Theorem~7.2.6]{Horn_Johnson2013} that satisfies $\mat{\Sigma}^{1/2}\mat{\Sigma}^{1/2} = \mat{\Sigma}$. Of course, $T(P)$ in~\eqref{eq: alphaSHD} is the~halfspace median of $P$, $\SHD_2$ is the~standard \textbf{sHD}~\eqref{eq: SHD}, and the~empirical $\alpha$-\textbf{sHD} is $\SHD_\alpha(\cdot; \widehat{P}_n)$ for $\widehat{P}_n \in\P{d}$ the~empirical distribution of $P$. The~following theorem establishes the~basic properties of the~$\alpha$-\textbf{sHD}. 

\begin{theorem}\label{th: alphaSHD properties}
The $\alpha$-\textbf{sHD}~\eqref{eq: alphaSHD} has the~following properties.
    \begin{enumerate}[label=(\roman*), ref=(\roman*)]
        \item\label{alphaDepth: continuity} The~$\alpha$-\textbf{sHD} mapping 
        \begin{equation*} % \label{eq: SHD map} 
            \SHD_\alpha \colon \PD \times \P{d} \to [0,1] \colon \left( \mat{\Sigma}, P \right) \mapsto \SHD_\alpha(\mat{\Sigma}; P) 
        \end{equation*}
        is continuous in both arguments at any $(\mat{\Sigma}, P) \in \PD \times \P{d}$ such that $P$ is smooth and the~halfspace median $T(P)$ is unique.

        \item\label{alphaDepth: consistency} Let $P \in \P{d}$ be smooth, and suppose that both the~location \textbf{HD} and the~$\alpha$-\textbf{sHD} with respect to $P$ are uniquely maximized at $T(P)\in\R^d$ and $\mat{\Sigma}\hs_\alpha \in \PD$, respectively. Then, the~following holds:
        \begin{enumerate}[label=(\alph*), ref=(\alph*)]
        \item For any sequence $\left\{ P_n \right\}_{n=1}^\infty \subset \P{d}$ converging weakly to $P$ and any sequence $\mat{\Sigma}_n$ of maximizers of $\SHD_\alpha(\cdot; P_n)$ it holds that $\lim_{n \to \infty} \mat{\Sigma}_n = \mat{\Sigma}\hs_\alpha$.
            \item Denote the~empirical distribution of a~random sample $\vec{X}_1, \dots, \vec{X}_n$ from $P$ by $\widehat{P}_n \in \P{d}$. Then, the~sample $\alpha$-scatter halfspace median matrix is strongly consistent, meaning that for any sequence $\widehat{\mat{\Sigma}}_n$ of maximizers of $\SHD_\alpha(\cdot; \widehat{P}_n)$, we have $\lim_{n\to\infty} \widehat{\mat{\Sigma}}_n = \mat{\Sigma}\hs_\alpha$ almost surely.
        \end{enumerate} 

        \item\label{alphaDepth: permInv} The~$\alpha$-\textbf{sHD} is equivariant under signed permutation transformations. That is, for any $P_{\vec{X}} \in \P{d}$, $\mat{\Sigma} \in \PD$, and any signed permutation matrix $\mat{A}$, we have
        \[
        \SHD_\alpha(\mat{A}\mat{\Sigma}\mat{A}^\top; P_{\mat{A}\vec{X}}) = \SHD_\alpha(\mat{\Sigma}; P_{\vec{X}}).
        \]

        \item\label{alphaDepth: convRate} Let $P\in\P{d}$ be a~distribution such that its $\alpha$-scatter median matrix $\mat{\Sigma}_\alpha\hs$ exists, and let $\varepsilon<1/3$. Consider a~sequence of sample $\alpha$-scatter median matrices $\widehat{\mat{\Sigma}}_n$ based on a~random sample $\vec{X}_1, \ldots, \vec{X}_n$ drawn from a~contaminated distribution $(1-\varepsilon) P +\varepsilon\,Q$, where $Q\in\P{d}$. Then there exist absolute constants $C_1, C_2>0$ such that for any $\delta\in(0,1/2)$ the~inequality
        \begin{equation*}
            \left|\SHD_\alpha(\mat{\Sigma}_\alpha\hs; P)-\SHD_\alpha(\widehat{\mat{\Sigma}}_n; P)\right|\leq\frac{\varepsilon}{1-\varepsilon}+C_1\sqrt{\frac{d}{n}}+C_2\sqrt{\frac{\log{(1/\delta)}}{n}}
        \end{equation*}
        holds with probability at least $1 - 2 \delta$ for all $n\in \N$ such that
        \begin{equation*}
            \sqrt{\frac{\log(1/\delta)}{2\,n}}<1/3.
        \end{equation*}

    \end{enumerate}
\end{theorem}

\begin{proof}
    Note that the~definition~\eqref{eq: alphaSHD} of $\alpha$-\textbf{sHD} can be rewritten as
    \begin{align*}
        \SHD_\alpha (\mat{\Sigma}; P) = \inf_{\vec{u} \in \S{d}} \min&\left\{\prob\left( \vec{X} \in \Sl{T(P), \vec{u}, \norm{\mat{\Sigma}^{1/2} \vec{u}}_\alpha}\right),\right.\\ &\;\,\left.\prob\left( \vec{X} \in \cSl{T(P), \vec{u}, \norm{\mat{\Sigma}^{1/2} \vec{u}}_\alpha}\right)\right\},
    \end{align*}
    where $\vec{X} \sim P$. Assertions~\ref{alphaDepth: continuity} and~\ref{alphaDepth: consistency} follow by directly adapting the~proofs of Theorem~\ref{theorem: continuity} and Theorem~\ref{theorem: median continuity}. Part~\ref{alphaDepth: permInv} follows from the~definition~\eqref{eq: alphaSHD}. Assertion~\ref{alphaDepth: convRate} is established similarly to Lemma~\ref{lemma: SHD rate}; the~proof is a~modification of the~argument of~\citet[Theorem 7.1]{Chen_etal2018}. The~only difference in the~reasoning is that we consider slabs $\Sl{T(P), \vec{u}, \norm{\mat{\Sigma}^{1/2} \vec{u}}_\alpha}$ of width $2\norm{\mat{\Sigma}^{1/2} \vec{u}}_\alpha$ instead of $2\sqrt{\vec{u}\T \mat{\Sigma} \vec{u}}$.
\end{proof}

Employing the~projection property~\eqref{eq1} of the~$\alpha$-symmetric distributions, the~same derivation as in \citet[Theorem~1]{Nagy2019} gives that for $P \in \P{d}$ that is $\alpha$-symmetric with the~first marginal distribution function $F$, the~expression~\eqref{eq: SHD for alpha} changes to
    \begin{equation}\label{eq: alphaSHD for alpha}  
    \begin{aligned}
    \SHD_\alpha(\mat{\Sigma}; P) & = 2\, \min\left\{ F\left( \inf_{\vec{u} \in \S{d}} \frac{\norm{\mat{\Sigma}^{1/2} \vec{u}}_\alpha}{\norm{\vec{u}}_{\alpha}} \right) - 1/2, \right. \\
    & \hspace{8em} \left. 1 - F\left( \sup_{\vec{u} \in \S{d}} \frac{\norm{\mat{\Sigma}^{1/2} \vec{u}}_\alpha}{\norm{\vec{u}}_{\alpha}} \right) \right\}. 
    \end{aligned}
    \end{equation}
Unlike the~standard \textbf{sHD} of $P$, the~$\alpha$-\textbf{sHD}~\eqref{eq: alphaSHD for alpha} can attain the~maximum possible value of $1/2$. The~following theorem identifies the~associated $\alpha$-scatter halfspace median matrix.

\begin{theorem}
Let $P \in \P{d}$ be $\alpha$-symmetric with the~first marginal distribution function $F$ from~\eqref{eq2}. Then,
    \begin{enumerate}[label=(\roman*), ref=(\roman*)]
        \item \label{thm13 i} the~$\alpha$-scatter halfspace depth $\SHD_\alpha(\cdot; P)$ is uniquely maximized at $\mat{\Sigma}\hs_\alpha = \sigma^2 \mat{I}\in\PD$, where $\sigma = F^{-1}(3/4)$, with the~maximum $\alpha$-\textbf{sHD} of $1/2$. 

        \item \label{thm13 ii} Assume that
            \begin{enumerate}[label=\upshape{($\mathsf{A}_4$)}, ref=\upshape{($\mathsf{A}_4$)}]
            \item \label{A4} the~marginal distribution function $F$ of the~$\alpha$-symmetric distribution $P$ satisfies the~condition
            \begin{equation*}%\label{eq5}
            \inf_{0<\abs{t-\sigma}< \gamma}\frac{\abs{F\left(t\right)-F\left(\sigma\right)}}{\abs{t - \sigma}}\geq\kappa
            \end{equation*}
            for some fixed constants $\gamma, \kappa >0$ such that $\varepsilon/(2(1-\varepsilon))<\gamma\kappa\leq1/4$. 
            \end{enumerate} 
        Denote by $\widehat{\mat{\Sigma}}_n \in \PD$ the~$\alpha$-scatter halfspace median based on a~random sample $\vec{X}_1, \ldots, \vec{X}_n\sim (1-\varepsilon)P+\varepsilon\,Q$. Then, for any $\delta\in(0, 1/2)$, there exists an absolute constant $C>0$ such that
        \begin{align}    \label{eq: rate alpha}
            \sup_{\vec{u} \in \S{d}} \abs{\frac{\norm{\widehat{\mat{\Sigma}}_n^{1/2}\vec{u}}_\alpha}{\norm{\vec{u}}_\alpha} - \frac{\norm{\left(\mat{\Sigma}_\alpha\hs\right)^{1/2}\vec{u}}_\alpha}{\norm{\vec{u}}_\alpha}} &= \sup_{\vec{u} \in \S{d}} \abs{\frac{\norm{\widehat{\mat{\Sigma}}_n^{1/2}\vec{u}}_\alpha}{\norm{\vec{u}}_\alpha} - \sigma}\\ &\leq C\left(\varepsilon+\sqrt{\frac{d}{n}}+\sqrt{\frac{\log{(1/\delta)}}{n}}\right)\nonumber
        \end{align} 
        holds with probability at least $1-2\delta$ for all sufficiently large $n$. This holds uniformly over all $\alpha$-symmetric distributions $P \in \P{d}$ such that condition~\ref{A4} is uniformly satisfied, and over all contaminating distributions $Q\in\P{d}$.
    \end{enumerate}
\end{theorem}

\begin{proof}
For $\alpha=2$, part~\ref{thm13 i} follows directly from Theorem~\ref{th: scatter median of symmetric}. Now, consider the~case $\alpha \neq 2$. By assumption~\ref{A1}, we have that $P$ is smooth, so the~$\alpha$-\textbf{sHD} of any matrix is bounded from above by $1/2$. Let $\sigma = F^{-1}(3/4)$. Using~\eqref{eq: alphaSHD for alpha} we obtain
    \begin{equation*}
        \SHD_\alpha(\sigma^2\mat{I}; P) = 2\, \min\left\{ F\left(\sigma \right) - 1/2, 1 - F\left( \sigma \right) \right\}=1/2.
    \end{equation*}
Consider any matrix $\mat{\Sigma}\in\PD$ such that $\SHD_\alpha(\mat{\Sigma}; P)=1/2$. From~\eqref{eq: alphaSHD for alpha}, we deduce
    \begin{equation*}
        \inf_{\vec{u} \in \S{d}} \frac{\norm{\mat{\Sigma}^{1/2} \vec{u}}_\alpha}{\norm{\vec{u}}_{\alpha}}=\sup_{\vec{u} \in \S{d}} \frac{\norm{\mat{\Sigma}^{1/2} \vec{u}}_\alpha}{\norm{\vec{u}}_{\alpha}} =\sigma,
    \end{equation*}
which implies that 
    \begin{equation*}
        \norm{\frac{1}{\sigma}\mat{\Sigma}^{1/2} \vec{u}}_\alpha = \norm{\vec{u}}_\alpha\quad \text{for all }\vec{u}\in\R^d.
    \end{equation*}
This gives that the~function $f\colon \vec{u}\mapsto\mat{\Sigma}^{1/2}\vec{u}/\sigma$ maps the~unit sphere with respect to the~$\alpha$-norm onto itself. By~\citet[Corollary~2.4]{An2005} (for $\alpha<1$) and \citet{LiSo1994} (for $\alpha\geq 1, \alpha\neq 2$), it follows that $\mat{\Sigma}^{1/2}/\sigma$ is a~signed permutation matrix. In particular, $(\mat{\Sigma}^{1/2}/\sigma)^{-1}=(\mat{\Sigma}^{1/2}/\sigma)\T=\mat{\Sigma}^{1/2}/\sigma$, hence $\mat{\Sigma}^{1/2}=\sigma^2\mat{\Sigma}^{-1/2}$. Here, we used the~fact that the~inverse of any (signed) permutation matrix is equal to its transpose and that $\mat{\Sigma}\in\PD$. This implies that $\mat{\Sigma}=\sigma^2\mat{I}$. Consequently, $\mat{\Sigma}\hs_\alpha = \sigma^2 \mat{I}$ is the~unique deepest matrix with $\alpha$-\textbf{sHD} of $1/2$, and we have shown part~\ref{thm13 i}.
    
For part~\ref{thm13 ii}, we can apply the~same reasoning as in Section~\ref{section: rate for elliptical}. Specifically, using part~\ref{alphaDepth: convRate} of Theorem~\ref{th: alphaSHD properties} and the~form of the~$\alpha$-\textbf{sHD} for $\alpha$-symmetric distributions~\eqref{eq: alphaSHD for alpha}, we obtain that for sufficiently large $n$
    \begin{equation*}  
        \sup_{\vec{u} \in \S{d}} \abs{F\left(\frac{\norm{\widehat{\mat{\Sigma}}_n^{1/2}\vec{u}}_\alpha}{\norm{\vec{u}}_\alpha}\right) -  \frac{3}{4}} \leq  \frac{\varepsilon}{2(1-\varepsilon)}+\frac{C_1}{2}\sqrt{\frac{d}{n}}+\frac{C_2}{2}\sqrt{\frac{\log{(1/\delta)}}{n}}
    \end{equation*}
holds. This, combined with condition~\ref{A4}, implies that 
    \begin{align*}
        \sup_{\vec{u} \in \S{d}}& \abs{\frac{\norm{\widehat{\mat{\Sigma}}_n^{1/2}\vec{u}}_\alpha}{\norm{\vec{u}}_\alpha} - \sigma} = \sup_{\vec{u} \in \S{d}} \abs{\frac{\norm{\widehat{\mat{\Sigma}}_n^{1/2}\vec{u}}_\alpha}{\norm{\vec{u}}_\alpha} - \frac{\norm{\left(\mat{\Sigma}_\alpha\hs\right)^{1/2}\vec{u}}_\alpha}{\norm{\vec{u}}_\alpha}} \\ &\leq \frac{1}{2\,\kappa}\left(\frac{\varepsilon}{1-\varepsilon}+C_1\sqrt{\frac{d}{n}}+C_2\sqrt{\frac{\log{(1/\delta)}}{n}}\right) \\
        & = C\left(\varepsilon+\sqrt{\frac{d}{n}}+\sqrt{\frac{\log{(1/\delta)}}{n}}\right)
    \end{align*}
holds for a~sufficiently large sample size $n$, which concludes the~proof.
\end{proof}

The expression on the~left-hand side of~\eqref{eq: rate alpha} can be interpreted as a~distance between $\widehat{\mat{\Sigma}}_n$ and $\mat{\Sigma}\hs_\alpha$ with respect to the~pseudometric~\citep[p.~26]{Dudley2002} on the~space $\PD$ defined by
\begin{equation}\label{eq: pseudometric}
\begin{aligned}
    \dist(\mat{A}, \mat{B}) & = \sup_{\vec{u} \in \S{d}} \abs{ \frac{\norm{\mat{A}^{1/2}\vec{u}}_\alpha}{\norm{\vec{u}}_\alpha} - \frac{\norm{\mat{B}^{1/2}\vec{u}}_\alpha}{\norm{\vec{u}}_\alpha}
    } \\
    & = \sup_{\vec{u} \colon \norm{\vec{u}}_\alpha = 1} \abs{ \norm{\mat{A}^{1/2}\vec{u}}_\alpha - \norm{\mat{B}^{1/2}\vec{u}}_\alpha
    }.
\end{aligned}
\end{equation}
Thus, we have shown that the~$\alpha$-scatter halfspace median achieves an upper bound of order $\varepsilon+\sqrt{d/n}+\sqrt{\log{(1/\delta)}/n}$ with respect to the~pseudometric~\eqref{eq: pseudometric} when estimating the~scatter parameter of $\alpha$-symmetric distribution under Huber's contamination model. This result is analogous to the~original upper bound in \eqref{eqSpherIneq}, which holds for the~standard scatter halfspace depth and spherical distributions.

%%%%%%%%%%%%%%%%%%%%%%%%%%%%%%%%%%%%%%%%%%%%%%
%% Acknowledgements %%
\section*{Acknowledgements}
% The~authors would like to thank the~anonymous referees, an Associate Editor and the~Editor for their constructive comments that improved the
% quality of this paper.

%% Funding information %%
\section*{Funding}
This work was supported by the~Czech Science Foundation (project no. 24-10822S), by the~ERC CZ grant LL2407 of the~Ministry of Education, Youth and Sport of the~Czech Republic, and by the~Grant Agency of Charles University (project no. 60125).

\section*{Author contributions}
CRediT: \textbf{Filip Bo\v{c}inec}: Conceptualization, Formal analysis, Investigation, Writing – original draft. \textbf{Stanislav Nagy}: Conceptualization, Formal analysis, Investigation, Supervision, Writing – review \& editing, Funding acquisition.

%% Disclosure statement %%
\section*{Disclosure statement}
The authors report there are no competing interests to declare.

%% ORCID %%
\section*{ORCID}
\textbf{Filip Bo\v{c}inec}: \href{https://orcid.org/0009-0000-4415-9302}{https://orcid.org/0009-0000-4415-9302}\\
\textbf{Stanislav Nagy}: \href{https://orcid.org/0000-0002-8610-4227}{https://orcid.org/0000-0002-8610-4227}

%% Online supplementary material %%
\section*{Online supplementary material}
A pdf document: Proof of Lemma~\ref{lemma: conv. of max depth} and additional technical details.

%%%%%%%%%%%%%%%%%%%%%%%%%%%%%%%%%%%%%%%%%%%%%
%% Bibliography %%

\begingroup
\small
\setlength{\bibsep}{2pt}
\bibliographystyle{apalike}
\bibliography{bibliography}
\endgroup

\end{document}

% --- supplement: BocinecNagy_supplement.tex ---

\maketitle
%
%
%
%
%

\renewcommand{\thesection} {S.\arabic{section}} 
\renewcommand{\theequation}{S.\arabic{equation}}
\renewcommand{\thefigure}{S.\arabic{figure}}
\renewcommand{\thetable}{S.\arabic{table}}
\renewcommand{\thetheorem}{S.\arabic{theorem}}
\renewcommand{\theexample}{S.\arabic{example}}

\section{Proof of Lemma~\ref{lemma: conv. of max depth}}

\newtheorem{auxlemma}{Lemma}
\renewcommand*{\theauxlemma}{A\arabic{auxlemma}}

This proof follows the~steps of the~proof by~\citet[Theorem~2.1]{Chen_etal2018}. Only minor modifications have been made in order to include cases when $1/5\leq \varepsilon <1/3$. Throughout the~proof, the~sample \textbf{HD} of $\vec{x}\in\R^d$ w.r.t. a~random sample points $\vec{X}_1, \dots, \vec{X}_n$ with empirical distribution $\widehat{P}_n \in \P{d}$ is also denoted by $\D(\vec{x}; \set{\vec{X}_i}_{i=1}^n)=\D(\vec{x}; \widehat{P}_n)$. We begin by stating two auxiliary lemmata.

\begin{auxlemma}\label{App1}
    Let $P\in \P{d}$ and consider the~empirical distribution $\widehat{P}_n$ based on a~random sample of size $n$ from $P$. Then for all $\delta\in(0, 1)$ the~inequality
    \begin{equation*}
        \sup_{H \in \mathcal{H}_d}\abs{P(H)-\widehat{P}_n(H)}\leq \sqrt{\frac{1440 \pi e}{1-e^{-1}}}\sqrt{\frac{d+1}{n}}+\sqrt{\frac{\log{(1/\delta)}}{2n}}
    \end{equation*}
    holds with probability at~least $1-\delta$, where by $\mathcal{H}_d$ we denote the~system of all closed halfspaces in $\R^d$, i.e. all sets in the~form $\set{\vec{x} \in\R^d \colon\inner{\vec{x}, \vec{u}}\geq t}$ for $\vec{u}\in\S{d}$ and $t\in\R$.
\end{auxlemma}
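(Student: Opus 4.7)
The plan is a standard two-step argument that decomposes the uniform deviation into its expectation and a concentration fluctuation around that expectation.

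First, set $\Phi \defeq \sup_{H \in \mathcal{H}_d}\abs{P(H)-\widehat{P}_n(H)}$. The class $\mathcal{H}_d$ of closed halfspaces in $\R^d$ has Vapnik--Chervonenkis dimension $d+1$, and its indicator functions are $[0,1]$-valued. By standard symmetrization followed by Dudley's entropy chaining applied to the empirical $L_2$-pseudometric (equivalently, a Rademacher complexity bound for VC-subgraph classes), one obtains
\begin{equation*}
    \E\,\Phi \leq C\sqrt{\frac{d+1}{n}}
\end{equation*}
for a universal constant $C$. I would track the constants through the chaining integral so as to recover exactly $C = \sqrt{1440\pi e/(1-e^{-1})}$; this is the same VC-type bound that appears in the proof of \citet[Theorem~2.1]{Chen_etal2018}, and it does not depend on the underlying distribution $P$.

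Next, I would apply the bounded differences (McDiarmid) inequality. The functional $\Phi = \Phi(\vec{X}_1,\dots,\vec{X}_n)$ changes by at most $1/n$ when any single observation is replaced, because halfspace indicators are $\{0,1\}$-valued and only one summand of $\widehat{P}_n(H)$ is altered. McDiarmid's inequality then gives
\begin{equation*}
    \prob\bigl(\Phi \geq \E\,\Phi + t\bigr) \leq \exp\!\bigl(-2nt^2\bigr)
\end{equation*}
for every $t>0$. Setting $t = \sqrt{\log(1/\delta)/(2n)}$ produces the second summand in the stated inequality, and combining the two steps yields the claim with probability at least $1-\delta$.

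The main obstacle lies in the first step, specifically in matching the precise numerical constant $\sqrt{1440\pi e/(1-e^{-1})}$. Different common formulations of Dudley's chaining bound for VC classes produce slightly different leading factors, so this part of the argument must follow the specific symmetrization-and-chaining calculation used in \citet{Chen_etal2018} (or its cited source) rather than a generic invocation. The extension of the main result to the regime $1/5\leq \varepsilon<1/3$ advertised in the surrounding text is invisible at this level, because the lemma is a pure uniform Glivenko--Cantelli estimate over halfspaces depending only on $P$, $n$, and $d$.
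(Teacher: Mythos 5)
Your proposal is correct and follows essentially the same route as the paper, whose entire proof is a deferral to \citet[Lemma~7.3]{Chen_etal2018} together with the observation that the VC dimension of $\mathcal{H}_d$ is $d+1$; the symmetrization-plus-chaining bound on $\E\,\Phi$ and the McDiarmid step with increments $1/n$ that you sketch are exactly the argument packaged inside that cited lemma. Your caveat about recovering the specific constant $\sqrt{1440\pi e/(1-e^{-1})}$ is fair, but since the paper itself does not re-derive it and simply inherits it from the reference, your level of detail already exceeds what the paper provides.
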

\begin{proof}
This can~be proven in the~same way as \citep[Lemma~7.3]{Chen_etal2018} using that~the~VC dimension of $\mathcal{H}_d$ is $d+1$.
\end{proof}

\begin{auxlemma}\label{App3}
    Let $N\sim\mathsf{Binomial}(n, p)$ and assume $p<1/3$. Then, for every $\delta\in(0, 1)$ satisfying $\sqrt{\frac{\log(1/\delta)}{2\,n}}<1/3$, we have
    \begin{equation*}
        \frac{N}{n-N}\leq \frac{p}{1-p}+\frac{9}{2}\sqrt{\frac{\log(1/\delta)}{2\,n}}<2
    \end{equation*}
    with probability at~least $1-\delta$.
\end{auxlemma}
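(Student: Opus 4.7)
The proof plan is to reduce the statement to a one-sided Hoeffding bound on $N/n$ and then do a short algebraic manipulation.

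First, set $\eta \defeq \sqrt{\log(1/\delta)/(2n)}$. Writing $N$ as a sum of $n$ i.i.d.\ Bernoulli$(p)$ variables and applying one-sided Hoeffding's inequality yields
$$
\prob\!\left(\tfrac{N}{n} \geq p + \eta\right) \leq e^{-2n\eta^2} = \delta,
$$
so with probability at least $1-\delta$ we have $N/n \leq p+\eta$. Since $x\mapsto x/(1-x)$ is increasing on $[0,1)$, and since by assumption $p+\eta < 1/3 + 1/3 = 2/3 < 1$, this event also gives
$$
\frac{N}{n-N} = \frac{N/n}{1 - N/n} \leq \frac{p+\eta}{1-p-\eta}.
$$

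Next I would compare $\frac{p+\eta}{1-p-\eta}$ with $\frac{p}{1-p}$ by bringing them to a common denominator. A direct calculation gives
$$
\frac{p+\eta}{1-p-\eta} - \frac{p}{1-p} = \frac{\eta}{(1-p)(1-p-\eta)}.
$$
Now I use the two bounds on $p$ and $\eta$: since $p < 1/3$ and $\eta < 1/3$, we have $1-p > 2/3$ and $1-p-\eta > 1/3$, so $(1-p)(1-p-\eta) > 2/9$. Consequently
$$
\frac{\eta}{(1-p)(1-p-\eta)} < \tfrac{9}{2}\eta,
$$
which establishes the first inequality of the claim.

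For the second inequality, I bound each term separately. Monotonicity of $x\mapsto x/(1-x)$ together with $p<1/3$ gives $p/(1-p) < 1/2$, while $\eta<1/3$ gives $\tfrac{9}{2}\eta < 3/2$; summing yields the strict bound $<2$. No step here is genuinely hard; the only care needed is ensuring that the assumption $\eta<1/3$ is used to keep $(1-p-\eta)$ bounded away from zero in the denominator of the algebraic identity, which is exactly where the constant $9/2$ comes from.
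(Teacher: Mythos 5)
Your proposal is correct and follows essentially the same route as the paper: a one-sided Hoeffding bound giving $N/n\leq p+\eta$ with probability at least $1-\delta$, followed by the algebraic comparison of $\frac{p+\eta}{1-p-\eta}$ with $\frac{p}{1-p}$, where the constant $\frac{9}{2}$ arises from the lower bound $(1-p)(1-p-\eta)>\frac{2}{9}$. The paper packages this last step as a small standalone inequality for $a,b\in(0,1/3)$ proved by cross-multiplication, but the computation is identical to your common-denominator calculation.
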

\begin{proof}
    The~proof is a~slight modification of~\citep[Lemma~7.1]{Chen_etal2018}. By Hoeffding's inequality~\citep[Section~2.1.2]{Wainwright2019} we have $\prob(N>np+t)\leq\exp(-2t^2/n)$ for all $t>0$. Set $t=\sqrt{n\log(1/\delta)/2}$ so that~with probability at~least $1-\delta$ we have $N\leq np+\sqrt{n\log(1/\delta)/2}$, hence also $n-N\geq n(1-p)-\sqrt{n\log(1/\delta)/2}$. As a~result
    \begin{equation}\label{eq: appIneq}
        \frac{N}{n-N}\leq \frac{p+\sqrt{\log(1/\delta)/(2n)}}{(1-p)-\sqrt{\log(1/\delta)/(2n)}}
    \end{equation}  
    holds with probability at~least $1-\delta$. Note that~for any $a,b\in (0, 1/3)$ we have that~$(a+b)/(1-a-b)\leq a/(1-a)+9b/2$. To see this, multiply this inequality with a~positive quantity $(1-a-b)(1-a)$ to obtain an~equivalent inequality $1\leq 9(1-a)(1-a-b)/2$, which is obviously true since $a,b\in(0, 1/3)$. Also, $a/(1-a)+9b/2<2$. Setting $a=p$ and $b=\sqrt{\log(1/\delta)/(2n)}$ in~\eqref{eq: appIneq} concludes the~proof.
\end{proof}

The~proof of Lemma~\ref{lemma: conv. of max depth} is divided into two parts.     
    
    \proofpart{1}{Auxiliary observations}
    First, we prepare the~following observations that~will be useful in deriving the~intended bounds.
    \begin{enumerate}[label=($\mathsf{L}_{\arabic*}$), ref=($\mathsf{L}_{\arabic*}$)]
        \item \label{L1} Consider a~random sample $\set{\vec{X}_i}_{i=1}^n\sim(1-\varepsilon) P +\varepsilon\,Q$. We can~decompose $\set{\vec{X}_i}_{i=1}^n=\set{\vec{X}_i, i\in N_1}\cup \set{\vec{X}_i, i\in N_2}$ where $N_1\cup N_2 = \set{1,\ldots, n}$, $N_1, N_2$ are disjoint, $\set{\vec{X}_i, i\in N_1}$ is a~random sample from $P$ and $\set{\vec{X}_i, i\in N_2}$ is a~random sample from $Q$. Denote by $n_1$ and $n_2$ the~cardinalities of $N_1$ and $N_2$, respectively. Note that~$n_2$ and $n_1=n-n_2$ are random variables such that~$n_2\sim\mathsf{Binomial}(n, \varepsilon)$ holds marginally.

        \item \label{L2} By Lemma~\ref{App1}, we have with probability at~least $1-\delta$ that
        \begin{multline*}
            \qquad\sup_{\vec{x}\in\R^d}\abs{\D(\vec{x}; P)-\D(\vec{x}; \set{\vec{X}_i, i\in N_1})} \\ \leq \sup_{H \in \mathcal{H}_d}\abs{P(H)-\widehat{P}_{n_1}(H)}\leq \sqrt{\frac{1440 \pi e}{1-e^{-1}}}\sqrt{\frac{d+1}{n_1}}+\sqrt{\frac{\log{(1/\delta)}}{2n_1}},
        \end{multline*}
        where $\widehat{P}_{n_1}$ is the~empirical distribution of $\set{\vec{X}_i, i\in N_1}$ and $\mathcal{H}_d$ is the~system of all closed halfspaces in $\R^d$.

        \item \label{L3} By the~definition of the~sample \textbf{HD}, it follows that
        \begin{equation*}
            \quad n_1\,\D(\vec{x}; \set{\vec{X}_i, i\in N_1}) \geq n\,\D(\vec{x}; \set{\vec{X}_i}_{i=1}^{n})-n_2 \geq n_1\,\D(\vec{x}; \set{\vec{X}_i, i\in N_1})-n_2
        \end{equation*}
        for all $\vec{x}\in\R^d$. For example, to see the~first inequality, note that~
        \begin{equation}\label{eq43}
            \inf_{\vec{u}\in\S{d}}\sum_{i\in N_1}\indic_{\set{\inner{\vec{X}_i, \vec{u}}\geq\inner{\vec{x}, \vec{u}}}}\geq \inf_{\vec{u}\in\S{d}}\sum_{i=1}^{n}\indic_{\set{\inner{\vec{X}_i, \vec{u}}\geq\inner{\vec{x}, \vec{u}}}}-n_2.
        \end{equation}
        This is because, for a~fixed $\vec{u}\in\S{d}$, the~left-hand side of \eqref{eq43} is the~number of observations from $\set{\vec{X}_i, i\in N_1}$ in the~halfspace $H_{\vec{x}, \vec{u}} \hspace{-2pt}=\hspace{-2pt} \set{\vec{y} \in \R^d \colon \inner{\vec{y},\vec{u}}\geq\inner{\vec{x},\vec{u}}}$, which is always greater than~or equal to the~number of observations from $\set{\vec{X}_i}_{i=1}^n$ in $H_{\vec{x}, \vec{u}}$ without $n_2$. That~is because some of the~observations from $\set{\vec{X}_i, i\in N_2}$ can~also lie in $H_{\vec{x}, \vec{u}}$. The~second inequality is proven analogously.

        \item By Lemma~\ref{App3}, if
        \begin{equation}\label{eq4}
            \sqrt{\frac{\log(1/\delta)}{2\,n}}<1/3
        \end{equation}
        holds for $\delta\in(0,1/2)$, then
        \begin{equation}\label{eq9}
            \prob\left[\frac{n_2}{n_1}\leq\frac{\varepsilon}{1-\varepsilon}+\frac{9}{2}\sqrt{\frac{\log(1/\delta)}{2n}}<2\right]\geq 1-\delta.
        \end{equation}
        Further, note that
        \begin{equation}\label{eq12}
            \frac{n_2}{n_1}<2\iff n_2< 2n_1\iff n-n_1< 2n_1\iff n_1> n/3.
        \end{equation}
        This means that~the~random event in~\eqref{eq9} implies that~at~least $1/3$ of all observations are non-contaminating.
    \end{enumerate}

    \proofpart{2}{The~intended bound}
    Let $n\in\N$ such that~\eqref{eq4} is satisfied. By~\ref{L1}, decompose $\set{\vec{X}_i}_{i=1}^n=\set{\vec{X}_i, i\in N_1}\cup\set{\vec{X}_i, i\in N_2}$. We derive the~following series of inequalities. These hold with probability at~least $1-\delta$ conditionally on the~decomposition $N_1, N_2$. We have
    \begin{equation} \label{eq11a}
    \begin{aligned}
         \D(\widehat{\vec{\mu}}\hs_n;P)&\stackrel{\text{\ref{L2}}}{\geq} \D(\widehat{\vec{\mu}}\hs_n; \set{\vec{X}_i, i\in N_1})-\sqrt{\frac{1440 \pi e}{1-e^{-1}}}\sqrt{\frac{d+1}{n_1}}-\sqrt{\frac{\log{(1/\delta)}}{2n_1}} \\
        &\stackrel{\text{\ref{L3}}}{\geq}\frac{n}{n_1}\D(\widehat{\vec{\mu}}\hs_n; \set{\vec{X}_i}_{i=1}^{n})-\frac{n_2}{n_1}-\sqrt{\frac{1440 \pi e}{1-e^{-1}}}\sqrt{\frac{d+1}{n_1}}-\sqrt{\frac{\log{(1/\delta)}}{2n_1}} \\
        &\;\geq\frac{n}{n_1}\D(\vec{\mu}\hs; \set{\vec{X}_i}_{i=1}^{n})-\frac{n_2}{n_1}-\sqrt{\frac{1440 \pi e}{1-e^{-1}}}\sqrt{\frac{d+1}{n_1}}-\sqrt{\frac{\log{(1/\delta)}}{2n_1}} \\
        &\stackrel{\text{\ref{L3}}}{\geq} \D(\vec{\mu}\hs; \set{\vec{X}_i, i\in N_1})-\frac{n_2}{n_1}-\sqrt{\frac{1440 \pi e}{1-e^{-1}}}\sqrt{\frac{d+1}{n_1}}-\sqrt{\frac{\log{(1/\delta)}}{2n_1}} \\
        &\stackrel{\text{\ref{L2}}}{\geq} \D(\vec{\mu}\hs; P)-\frac{n_2}{n_1}-2\sqrt{\frac{1440 \pi e}{1-e^{-1}}}\sqrt{\frac{d+1}{n_1}}-\sqrt{\frac{2\log{(1/\delta)}}{n_1}}
    \end{aligned}
    \end{equation}
    where the~third inequality follows from the~fact that~$\widehat{\vec{\mu}}\hs_n$ is the~maximizer of $\D(\cdot; \set{\vec{X}_i}_{i=1}^{n})$. Rewriting~\eqref{eq11a}, we have that
    \begin{equation*}
        \prob\left[\abs{\D(\vec{\mu}\hs; P)-\D(\widehat{\vec{\mu}}_n\hs; P)}\leq \frac{n_2}{n_1}+2\sqrt{\frac{1440 \pi e}{1-e^{-1}}}\sqrt{\frac{d+1}{n_1}}+\sqrt{\frac{2\log{(1/\delta)}}{n_1}}\,\middle|N_1, N_2\right]\geq 1-\delta.
    \end{equation*}
    However, by taking the~expectation w.r.t. the~decomposition $N_1$ and $N_2$ on both sides (and considering its monotonicity), we obtain
    \begin{equation*}
        \prob\left[\abs{\D(\vec{\mu}\hs; P)-\D(\widehat{\vec{\mu}}_n\hs; P)}\leq \frac{n_2}{n_1}+2\sqrt{\frac{1440 \pi e}{1-e^{-1}}}\sqrt{\frac{d+1}{n_1}}+\sqrt{\frac{2\log{(1/\delta)}}{n_1}}\right]\geq 1-\delta.
    \end{equation*}
    Now, we combine this result with inequality~\eqref{eq9}. Note that~for any two random events $A, B$ we have $1 \geq \prob(a~\cup B) = \prob(A) + \prob(B) - \prob(a~\cap B)$, which gives $\prob(A\cap B)\geq \prob(A)+\prob(B)-1$. Therefore, it holds that
    \begin{equation}\label{eq10b}
    \begin{aligned}
        \prob\left[\abs{\D(\vec{\mu}\hs; P)-\D(\widehat{\vec{\mu}}_n\hs; P)}\leq \frac{n_2}{n_1}+2\sqrt{\frac{1440 \pi e}{1-e^{-1}}}\sqrt{\frac{d+1}{n_1}}+\sqrt{\frac{2\log{(1/\delta)}}{n_1}},\right.\\
        \left.\frac{n_2}{n_1}\leq\frac{\varepsilon}{1-\varepsilon}+\frac{9}{2}\sqrt{\frac{\log(1/\delta)}{2n}}\leq 2\right]\geq 1-2\delta. 
    \end{aligned}
    \end{equation}
    Now, under the~condition of the~second random event in~\eqref{eq10b}, we can~further upper bound 
    \begin{align*}
        \left\vert \D(\vec{\mu}\hs; P) \right. & \left. -\D(\widehat{\vec{\mu}}_n\hs; P) \right\vert \leftstackrel{\eqref{eq11a}}{\leq}\frac{n_2}{n_1}+2\sqrt{\frac{1440 \pi e}{1-e^{-1}}}\sqrt{\frac{d+1}{n_1}}+\sqrt{\frac{2\log{(1/\delta)}}{n_1}}\\
        &\leftstackrel{\eqref{eq9}}{\leq}\frac{\varepsilon}{1-\varepsilon}+\frac{9}{2}\sqrt{\frac{\log(1/\delta)}{2n}}+2\sqrt{\frac{1440 \pi e}{1-e^{-1}}}\sqrt{\frac{d+1}{n_1}}+\sqrt{\frac{2\log{(1/\delta)}}{n_1}} \\
        &\leftstackrel{\eqref{eq12}}{\leq}\frac{\varepsilon}{1-\varepsilon}+\frac{9}{2}\sqrt{\frac{\log(1/\delta)}{2n}}+2\sqrt{\frac{1440 \pi e}{1-e^{-1}}}\sqrt{\frac{3(d+1)}{n}}+\sqrt{\frac{6\log{(1/\delta)}}{n}} \\
        &=\frac{\varepsilon}{1-\varepsilon}+24\sqrt{\frac{30\pi e}{1-e^{-1}}}\sqrt{\frac{d+1}{n}}+\frac{9\sqrt{2}+4\sqrt{6}}{4}\sqrt{\frac{\log{(1/\delta)}}{n}} \\
        &\leftstackrel{1\leq d}{\leq}\frac{\varepsilon}{1-\varepsilon}+24\sqrt{\frac{30\pi e}{1-e^{-1}}}\sqrt{\frac{2d}{n}}+\frac{9\sqrt{2}+4\sqrt{6}}{4}\sqrt{\frac{\log{(1/\delta)}}{n}} \\
        &= \frac{\varepsilon}{1-\varepsilon}+C_1\sqrt{\frac{d}{n}}+C_2\sqrt{\frac{\log{(1/\delta)}}{n}}.
    \end{align*}
    Ultimately, we have
    \begin{equation*}
    \begin{aligned}
     \prob\left[\abs{\D(\vec{\mu}\hs; P)-\D(\widehat{\vec{\mu}}_n\hs; P)}\leq \frac{\varepsilon}{1-\varepsilon}+C_1\sqrt{\frac{d}{n}}+C_2\sqrt{\frac{\log{(1/\delta)}}{n}},\right.\\ \left.\frac{n_2}{n_1}\leq\frac{\varepsilon}{1-\varepsilon}+\frac{9}{2}\sqrt{\frac{\log(1/\delta)}{2n}}\leq 2\right]\geq 1-2\delta. 
    \end{aligned}
    \end{equation*}
    For any random events $A, B$, we have $\prob(A\cap B)\leq \prob(A)$. Therefore, the~preceding inequality implies that
    \begin{equation*}
       \abs{\D(\vec{\mu}\hs; P)-\D(\widehat{\vec{\mu}}_n\hs; P)}\leq \frac{\varepsilon}{1-\varepsilon}+C_1\sqrt{\frac{d}{n}}+C_2\sqrt{\frac{\log{(1/\delta)}}{n}}
    \end{equation*}
    holds with probability at~least $1-2\delta$. The~proof is concluded.

    % \newpage
    \section{Difficulties in establishing upper bounds for the~scatter halfspace median~matrix with \texorpdfstring{$\alpha\ne 2$}{alpha<2}} 

Unlike in the~situation with the~spherically symmetric distributions in Section~\ref{section: rate for elliptical}, for general $\alpha$-symmetric distributions $P \in \P{d}$, the~concentration inequality for the~\textbf{sHD} of the~scatter halfspace median~matrix $\mat{\Sigma}\hs = \sigma^2 \mat{I}$ of $P$ in Lemma~\ref{lemma: SHD rate} does not warrant a~concentration inequality for the~sample scatter halfspace median~$\widehat{\mat{\Sigma}}\hs_n$. We illustrate this in the~situation with $\alpha< 2$; for $\alpha> 2$, analogous arguments apply. The~problem with establishing rates for $\widehat{\mat{\Sigma}}\hs_n$ is due to two reasons:
    \begin{enumerate}[label=(\roman*), ref=(\roman*)]
    \item \label{problem i} The~gap in the~range of values
    \begin{equation} \label{eq: gap}
    \sigma~\, d^{1/2-1/\alpha} = \inf_{\vec{u} \in \S{d}} \frac{\sqrt{\vec{u}\T \mat{\Sigma}\hs \vec{u}}}{\norm{\vec{u}}_{\alpha}} < \sup_{\vec{u} \in \S{d}} \frac{\sqrt{\vec{u}\T \mat{\Sigma}\hs \vec{u}}}{\norm{\vec{u}}_{\alpha}} = \sigma.
    \end{equation}
    These two bounds play a~major role in the~expression~\eqref{eq: SHD for alpha} for the~\textbf{sHD} of $\mat{\Sigma}\hs$. As we will see, Lemma~\ref{lemma: SHD rate} allows us to bound only the~range of the~map $\varphi_n \colon \S{d} \to \R \colon \vec{u} \mapsto \vec{u}\T \widehat{\mat{\Sigma}}\hs_n \vec{u}$, which can~be proved to be close to the~range of $\varphi \colon \S{d} \to \R \colon \vec{u} \mapsto \vec{u}\T \mat{\Sigma}\hs \vec{u}$. The~gap in~\eqref{eq: gap}, however, does not allow us to relate the~individual values $\varphi_n(\vec{u})$ and $\varphi(\vec{u})$ as would be needed to bound the~norm of $\widehat{\mat{\Sigma}}\hs_n - \mat{\Sigma}\hs$.
    \item \label{problem ii} The~fact that~as dimension $d$ increases, the~constant $\sigma$ in~\eqref{eq: conditionSigma} grows to infinity. This means that~for obtaining a~concentration inequality valid in any dimension $d$, as in the~location case or for $\alpha= 2$, one would need to impose a~condition similar to~\ref{A3} with $\abs{t - \sigma^2} < \gamma$ with arbitrarily large $\sigma$, which is impossible due to $F$ being bounded from above. 
    \end{enumerate}

We conclude our discussion by elaborating on these two issues in more detail. To explain why~\ref{problem i} causes problems for establishing the~upper bound, consider $P\in\P{d}$ $\alpha$-symmetric with the~distribution function of its first marginal $F$. Then $\mat{\Sigma}\hs=\sigma^2\mat{I}$, where $\sigma$ is defined by~\eqref{eq: conditionSigma}. Suppose that~the~inequality
\begin{equation*}
    \abs{\SHD(\sigma^2\mat{I}; P)-\SHD(\widehat{\mat{\Sigma}}_n\hs; P)}\leq \frac{\varepsilon}{1-\varepsilon}+C_1\sqrt{\frac{d}{n}}+C_2\sqrt{\frac{\log{(1/\delta)}}{n}}\eqdef R(\delta, n, d, \varepsilon)
\end{equation*}
from Lemma~\ref{lemma: SHD rate} holds. Using the~expression of the~\textbf{sHD} for $\alpha$-symmetric distributions~\eqref{eq: SHD for alpha}, we can~deduce that~
\begin{align*}
    & \left(F(\sigma~d^{1/2-1/\alpha})-\frac{1}{2}\right)-\inf_{\norm{\vec{u}}_\alpha=1}\min\set{F\left(\sqrt{\vec{u}\T\widehat{\mat{\Sigma}}_n\hs\vec{u}}\right)-\frac{1}{2}, 1-F\left(\sqrt{\vec{u}\T\widehat{\mat{\Sigma}}_n\hs\vec{u}}\right)}\\
    &= \left(1-F(\sigma)\right)-\inf_{\norm{\vec{u}}_\alpha=1}\min\set{F\left(\sqrt{\vec{u}\T\widehat{\mat{\Sigma}}_n\hs\vec{u}}\right)-\frac{1}{2}, 1-F\left(\sqrt{\vec{u}\T\widehat{\mat{\Sigma}}_n\hs\vec{u}}\right)} \\
    & \leq R(\delta, n, d, \varepsilon)/2. 
\end{align*}
%
This implies that~for any $\vec{u}\in\R^d, \norm{\vec{u}}_\alpha=1$, it must hold that
    \[
    \begin{aligned}
    F(\sigma~d^{1/2-1/\alpha})-F\left(\sqrt{\vec{u}\T\widehat{\mat{\Sigma}}_n\hs\vec{u}}\right) & \leq R(\delta, n, d, \varepsilon)/2,\\
    F\left(\sqrt{\vec{u}\T\widehat{\mat{\Sigma}}_n\hs\vec{u}}\right)-F(\sigma) & \leq R(\delta, n, d, \varepsilon)/2.
    \end{aligned}
    \]
Also, recall that~$\sigma$ depends only on $F$ and $d$ and $F(\sigma~d^{1/2-1/\alpha})\leq F(\sigma)$. Combining all of this, we have that~for any $\vec{u}$ with $\norm{\vec{u}}_\alpha=1$, 
\begin{equation}\label{eq: interval of sc median}
    F\left(\sqrt{\vec{u}\T\widehat{\mat{\Sigma}}_n\hs\vec{u}}\right) \in \left[ F(\sigma~d^{1/2-1/\alpha})-R(\delta, n, d, \varepsilon)/2, F(\sigma)+R(\delta, n, d, \varepsilon)/2\right].
\end{equation}
As opposed to the~situation with $\alpha= 2$ and the~resulting bound~\eqref{eq: bound scatter 3/4}, for $\alpha\ne 2$ we see that~the~situation is fundamentally different. Instead of having a~bound on 
\begin{equation*}
    \abs{F\left( \sqrt{\vec{u}\T \widehat{\mat{\Sigma}}_n\hs \vec{u}} \right) - 3/4} = \abs{F\left( \sqrt{\vec{u}\T \widehat{\mat{\Sigma}}_n\hs \vec{u}} \right) - F\left( \sqrt{\vec{u}\T \mat{\Sigma}\hs \vec{u}} \right)}
\end{equation*} valid for all $\vec{u} \in \S{d}$, in \eqref{eq: interval of sc median} we can~bound only the~range of the~values that~$F\left( \sqrt{\vec{u}\T \widehat{\mat{\Sigma}}_n\hs \vec{u}} \right)$ must take. The~length of this range does not converge to $0$ as $n\to \infty$. From such a~crude result, bounding the~deviation $\abs{\vec{u}\T \widehat{\mat{\Sigma}}_n\hs\vec{u} - \vec{u}\T \mat{\Sigma}\hs \vec{u}}$ for individual vectors $\vec{u} \in \S{d}$ is not possible, even under a~condition guaranteeing an~appropriate growth of $F$ such as~\ref{A3}. 

In the~following example, we illustrate the~other problem~\ref{problem ii} with establishing the~upper bound for $\widehat{\mat{\Sigma}}_n\hs$.

\begin{example}
    Take $P \in \P{d}$ the~$1$-symmetric distribution with independent Cauchy marginals from Example~\ref{ex: 1}. The~distribution function of its first marginal is $F(t)=1/2+\arctan(t)/\pi$ for $t\in\R$. By Theorem~\ref{th: scatter median of symmetric}, the~only scatter halfspace median~of $P$ is $\sigma^2\mat{I}$, where $\sigma>0$ is given by
    \begin{equation*}
        \arctan(\sigma\,d^{-1/2})/\pi= 1/2 - \arctan(\sigma)/\pi = \arctan(1/\sigma)/\pi,
    \end{equation*} 
    where in the~second equality we used that~for $\sigma>0$, the~equality $\pi/2-\arctan(\sigma)=\arctan(1/\sigma)$ holds. 
    We obtain $\sigma~= d^{1/4}$, and the~unique median~matrix of $P$ is $\mat{\Sigma}\hs = \sqrt{d}\,\mat{I}$. The~maximum \textbf{sHD} is 
    \begin{equation*}
    \max_{\mat{\Sigma}\in \PD} \SHD(\mat{\Sigma}; P) = \SHD(\mat{\Sigma}\hs; P) = 2 \left( F(d^{-1/4})-1/2 \right)=\frac{2}{\pi}\arctan(d^{-1/4}),
    \end{equation*}
    which goes to $0$ is $d\to \infty$. This is the~same result as~\citet[Theorem 4.4]{Paindaveine2018} obtained by calculating the~exact \textbf{sHD} of any matrix w.r.t. $P$ and maximizing it.

    The~difficulty with our bounds~\eqref{eq: interval of sc median} is that~if $\alpha\neq 2$, then $\sigma$ depends on $d$. In our case of $\alpha= 1$ and the~Cauchy distribution, for example,~\eqref{eq: interval of sc median} rewrites into
    \begin{equation*}
    \arctan\left(\sqrt{\vec{u}\T\widehat{\mat{\Sigma}}_n\hs\vec{u}}\right) \in \left[ \arctan(d^{-1/4}) -\frac{\pi}{2} R(\delta, n, d, \varepsilon), \arctan(d^{1/4}) + \frac{\pi}{2} R(\delta, n, d, \varepsilon)\right].
    \end{equation*}
    To invert the~inequality from above into one for $\abs{\sqrt{\vec{u}\T\widehat{\mat{\Sigma}}_n\hs\vec{u}} - d^{1/4}}$, one would need to establish a~condition analogous to~\ref{A3} that~is valid uniformly among all $\sigma~= d^{1/4}$, for all dimensions $d$. That~is, of course, impossible, as the~distribution function $F$ is bounded from above.
\end{example}

\bibliographystyle{apalike}
\bibliography{bibliography}{}